\newcommand\reallywidecheck[1]{%
\savestack{\tmpbox}{\stretchto{%
  \scaleto{%
    \scalerel*[\widthof{\ensuremath{#1}}]{\kern-.6pt\bigwedge\kern-.6pt}%
    {\rule[-\textheight/2]{1ex}{\textheight}}
  }{\textheight}%
}{0.6ex}}%
\stackon[1pt]{#1}{\scalebox{-0.8}{\tmpbox}}%
}
\newcommand{\e}{\mathrm{e}}
\newcommand{\Op}{\mathrm{Op}}
\newcommand{\supp}{\mathrm{supp\,}}
\newcommand{\Ima}{\mathrm{Im\,}}
\newcommand{\Rea}{\mathrm{Re\,}}
\newcommand{\mO}{\mathcal{O}}
\newcommand{\C}{\mathds{C}}
\newcommand{\R}{\mathds{R}}
\newcommand{\N}{\mathds{N}}
\newcommand{\Z}{\mathds{Z}}
\renewcommand{\ge}{\geqslant}
\renewcommand{\geq}{\geqslant}
\renewcommand{\le}{\leqslant}
\renewcommand{\leq}{\leqslant}
\newtheorem{thm}{Theorem}
\newtheorem{prop}[thm]{Proposition}
\newtheorem{rem}[thm]{Remark}
\theoremstyle{remark}
\numberwithin{equation}{section}
\numberwithin{thm}{section}
\title{Tunneling for the $\overline{\partial}$-operator}
\author{Johannes Sj\"ostrand}
\address[Johannes Sj\"ostrand]{IMB, 
  Universit\'e de Bourgogne Franche-Comt\'e, 
  UMR 5584 du CNRS, 
  9, avenue Alain Savary - BP 47870 - 21078 Dijon Cedex, France.}
\email{johannes.sjostrand@u-bourgogne.fr}
\author{Martin Vogel}
\address[Martin Vogel]{Institut de Recherche Math{\'e}matique Avanc{\'e}e 
- UMR 7501, CNRS et Universit{\'e} de Strasbourg, 7 rue René-Descartes, 
67084 Strasbourg Cedex, France.}
\email{vogel@math.unistra.fr}
\date{\today}
\keywords{Spectral theory}
\subjclass[2010]{}
\begin{document}
\begin{abstract} 
We study the small singular values of the $2$-dimensional semiclassical 
differential operator $P = 2\,\e^{-\phi/h}\circ hD_{\overline{z}}\circ \e^{\phi/h}$ 
on $S^1+iS^1$ and on $S^1+i\R$ where $\phi$ is given by $\sin y$ and by $y^3/3$, 
respectively. The key feature of this model is the fact that we can pinpoint 
precisely where in phase space the Poisson bracket $\{p,\overline{p}\}=0$, 
where $p$ is the semiclassical symbol of $P$. 
\par 
We give a precise asymptotic description of the exponentially small singular 
values of $P$ by studying the tunneling effects of an associated Witten complex. 
We use these asymptotics to determine a Weyl law for the exponentially small 
singular values of $P$. 
\end{abstract}
 \maketitle
\setcounter{tocdepth}{1}
\tableofcontents
\section{Introduction}\label{int}
\textbf{The setting.} It is well known that the norm of the resolvent of 
linear non-selfadjoint operators $P$ on complex Hilbert space can be large 
even far away from the spectrum of the operator. As a consequence the spectrum 
of $P$ can be unstable under small perturbations. This phenomenon 
-- also called pseudo-spectral phenomenon -- was initially considered a drawback 
as it can be at the source of computational errors in numerical 
analysis \cite{RT92,Tr97,TrEm05}. However, it is also at the source of new and 
interesting results. A large resolvent norm of $P-z$, $z\in\C$, is related to  
the existence of approximate solutions to the eigenvector problem $(P-z)u=0$, 
which is in turn equivalent to the existence of small singular values of $P-z$. 
\par 
In this paper we study the small singular values of a 
non-selfadjoint semiclassical differential operator $P_h$, of Fredholm 
index $0$, on $X$ being either $\R^d$ or a $d$-dimensional smooth compact manifold, 
with semiclassical parameter $h>0$ and symbol $p$. A natural condition for the 
non-selfadjointness of $P_h$ is that $\{p,\overline{p}\}\not\equiv 0$ on 
$T^*X$-- here $\{\cdot,\cdot\}$ denotes the Poisson bracket -- since this 
is the principal symbol of $\frac{i}{h}[P_h,P_h^*]$. It was shown in the 
string of works \cite{Da97,Da99,Da99b,Zw01,DSZ04} that if there is a point 
$\rho_+ \in T^*X\cap p^{-1}(z)$, $z\in\C$, such that 
$\frac{1}{i}\{p,\overline{p}\}(\rho_+)>0$ then there exists a smooth function 
$e_+$ on $X$, mircolocalized to $\rho_+$, such that 
\begin{equation}\label{intro_eq1}
	(P-z)e_+ = O(h^\infty). 
\end{equation}
\par 
The existence of such a quasimode implies the existence of a small singular 
value of $(P-z)$ of size $O(h^\infty)$. Moreover, when $p$ is analytic, then 
we may replace the above error estimates by $O(\e^{-1/Ch})$.
\par 
On the other hand, near regions where $\frac{1}{i}\{p,\overline{p}\}<0$ on 
$p^{-1}(z)$ we have that microlocally $|P-z|\geq h/C$, where we set 
$|P-z|:=((P-z)^*(P-z))^{1/2}$. This indicates that the eigenfunctions of 
$|P-z|$, i.e. the singular vectors of $P-z$ should be mircolocalized near 
the regions in $p^{-1}(z)$ where $\frac{1}{i}\{p,\overline{p}\}>0$. 
Similar observations, with a factor of $(-1)$ in front of the Poisson 
bracket, hold for the singular vectors of $(P-z)^*$. 
%
%
%
\par
In dimension $d=1$, when $p^{-1}(z)$ is compact and $\{p,\overline{p}\}\neq 0$ 
on $p^{-1}(z)$, we can show that that there are $|p^{-1}(z)|=:n_0$ many singular 
values of $(P-z)$ and $(P-z)^*$ which are of size $O(h^\infty)$ (or 
$O(\e^{-1/Ch})$ when $p$ is analytic). However, the $n_0+1$-st singular value will 
typically be of size $\sqrt{h}$. 
\par 
In higher dimensions the situation is more complicated. We will focus on the 
situation where $\{p,\overline{p}\}\not\equiv 0$ and separated by regions in 
$p^{-1}(z)$ where $\{p,\overline{p}\} = 0$ but $\{p,\{p,\overline{p}\}\} \neq  0$. 
For related results in more degenerate cases, see \cite{Pr03,Pr06,Pr08}.
\\
\par 
The aim of this paper is to study the number of small singular values for certain  
$2$-dimensional model operators $P_h$ with the symbols $p$ such that 
$\{p,\overline{p}\} \neq 0$ or $\{p,\{p,\overline{p}\}\} \neq 0$.
\\
\par
\textbf{The model.} We will work on the complex 1-dimensional manifold
$X+i Y$, where $X=S^1=\R/2\pi \Z$ and $Y$ is equal to $\R$ or $S^1$. 
We frequently identify $X+iY$ with the real 2 dimensional manifold $X\times Y$, 
by writing $z=x+iy$, $x\in X$, $y\in Y$.
\par
%
Let $h\in ]0,h_0]$, $h_0>0$, and let $\phi(z)$ be a real-valued smooth function 
on $X+iY$ and consider the operator 
\begin{equation}\label{eq1}
	P = 2\,\e^{-\phi/h}\circ hD_{\overline{z}}\circ \e^{\phi/h} 
	  = 2(hD_{\overline{z}} + (D_{\overline{z}}\phi))
	  =hD_x+h\partial_y + \partial_y\phi,
\end{equation}
where $\partial_{\overline{z}} = \frac{1}{2}( \partial_x + i \partial_y)$ 
and $D_{\overline{z}} = \frac{1}{i}\partial_{\overline{z}} = 
\frac{1}{2}( D_x + i D_y)$. With a slight abuse of notation we write 
$\phi(z) = \phi(y)$, for $z=x+iy\in X+iY$, and we choose the following 
two model cases
\begin{equation}\label{eq6a}
\phi(y) =
	\begin{cases} 
		\frac{1}{3} y^3, \text{ when } Y=\mathbb{R},\\ 
		\sin y, \text{ when } Y= S^1.
	\end{cases}
\end{equation}
We equip $P$ with the domain 
\begin{equation}\label{eq6a.1.1}
\mathcal{D}(P)
=
\begin{cases}
	\{ u\in
	L^2(S^1_x\times Y);\, (1+y^2)u,\,
				 hD_xu,\,hD_yu\in L^2(S^1_x\times Y) \}, 
	\quad Y=\mathbb{R}_y,
	\\
	H_h^1(S^1_x\times Y)
	=\{u\in L^2(S^1_x\times Y);\,hD_xu,\,hD_yu\in L^2(S^1_x\times Y) \},
	\quad Y=S^1_y.
\end{cases}
\end{equation}
The operator $P$ has the symbol 
\begin{equation}\label{eq6a.1}
	p(x,y;\xi,\eta):= \xi +i\eta + \partial_y\varphi =: p_\xi(y,\eta),
\end{equation}
defined on $S^1_x\times Y \times \R_\xi\times \R_\eta$. Correspondingly 
$P$ has the orthogonal direct sum decomposition 
\begin{equation}\label{eq:int1}
	P = \bigoplus_{\xi\in h\Z}P_\xi, 
	\quad 
	P_\xi = \xi +h\partial_y + \partial_y\varphi \text{ on } Y, 
	\quad h\in ]0,h_0].
\end{equation}
The characteristic set of $P$ is given by 
\begin{equation}\label{eq:int1.1}
	p^{-1}(0)= 
	\{
		(x,y;\xi,\eta)\in S^1\times Y\times \R_\xi \times \R_\eta; ~\eta =0,
		~\partial_y\varphi =-\xi
	\}.
\end{equation}
When $Y=\R$ we have $\xi\leq 0$ on $p^{-1}(0)$ and when $Y=S^1$ we have 
$\xi\in[-1,1]$. We exclude the limiting values $\xi=0$ and $\xi\in \{-1,1\}$ 
respectively and put 
\begin{equation}\label{eq:int1.4}
	\Sigma =
	\{
		(x,y;\xi,\eta)\in	p^{-1}(0); ~ \xi <0 \text{ when } Y=\R,~
		-1 < \xi <1 \text{ when } Y=S^1
	\}.
\end{equation}
We recall that that the Hamilton vector field of a $C^1$ function 
$f(x,y,\xi,\eta)$ is given by
\begin{equation*}
H_f= \partial _\xi f\, \partial _x+\partial _\eta f\, \partial
_y-\partial _xf\, \partial _\xi -\partial _yf\, \partial _\eta .
\end{equation*}
Then we get 
\begin{equation}\label{eq:int1.3}
  \frac{1}{2i} \{ p,\overline{p}\} =
  \frac{1}{2i}H_p(\overline{p})=
	\frac{1}{2i}\left(
		\partial _\xi p\, \partial _x\overline{p}
		+\partial _\eta p\, \partial_y\overline{p} 
		-\partial _x p\, \partial _\xi\overline{p} 
		-\partial _y p\, \partial _\eta \overline{p}
	\right)
	= \partial^2_y\phi
\end{equation}
where $\{\cdot,\cdot\}$ denotes the Poisson bracket.
We split $\Sigma=\Sigma_+\cup\Sigma_-$, where 
\begin{equation}\label{eq:int1.2.5}
\begin{split}
	\Sigma_\pm =
	\{
		(x,y;\xi,\eta)\in	p^{-1}(0); ~ y = y_\pm(\xi),~ 
		&\xi \in ]-\infty, 0[ \text{ when } Y=\R,\\
		&\xi\in ]-1,1[ \text{ when } Y=S^1
	\}.
\end{split}
\end{equation}
where $y_\pm(\xi)$ are the solutions to $\partial_y \phi(y_\pm(\xi))=-\xi$ 
with $\pm\partial^2_y\phi(y_\pm(\xi)) >0$. Observe that 
$\frac{1}{2i}\{p_\xi,\overline{p_\xi}\}=\partial^2_y\phi$ and that 
$\{p,\overline{p}\}(x,y;\xi,\eta)=\{p_\xi,\overline{p_\xi}\}(y,\eta)$, so 
\begin{equation}\label{eq:int1.2}
	\Sigma_\pm =
	\{
		(x,y;\xi,\eta)\in	p^{-1}(0); ~ 
		\pm\frac{1}{i}\{p,\overline{p}\}(x,y;\xi,\eta) >0
	\}.
\end{equation}
The submanifolds $\Sigma_\pm$ are symplectic. Indeed let 
$\sigma =d\xi\wedge dx + d\eta\wedge dy$ be the symplectic form on 
$T^*X\times Y$. Then, 
\begin{equation}\label{eq:int1.3}
	\sigma|_{\Sigma_\pm} \simeq d\xi\wedge dx,
\end{equation}
since $\eta =0$ on $\Sigma_\pm$.
\section{The main result}
We are interested in the small singular values of $P$ which are 
$\ll h^{2/3}$. We recall that the singular spectrum of $P$ is defined 
as the square root of the singular spectrum of $P^*P$ which is a positive 
essentially self-adjoint operator and we equip it with its natural domain. Indeed, when 
$Y=S^1$, then by the Rellich theorem we know that the inclusion 
$H^2_h(S^1\times S^1)\xhookrightarrow{} L^2(S^1\times S^1)$ is compact which implies 
that $P^*P$ has a compact resolvent and so the spectrum of $P^*P$ is purely 
discrete and $\subset [0,\infty[$. When $Y=\R$, we will see by a less direct 
argument that the spectrum $P^*P$, being contained in $[0,\infty[$, is discrete 
away from $0$. The \emph{singular values} of $P$ are then defined as the square roots 
of the eigenvalues of $P^*P$. 
\par
We define the partial Fourier transform of a function 
$u(x,y)$ on $X\times Y$ by
\begin{equation*}
\mathcal{F}u(\xi ,y)=\widehat{u}(\xi ,y)=\int_Xe^{-ix\xi /h}u(x,y)dx, \ \xi
\in \widehat{X}= h \mathbb{Z}.
\end{equation*}
Then $\mathcal{F}:\, L^2(X\times Y)\to L^2(\widehat{X}\times Y)$ is
unitary when $X$ and $Y$ are equipped with the Lebesgue measure, 
and $\widehat{X}=h\mathbb{Z}$ is equipped with the $1/(2\pi )$ times 
the counting measure on $h\mathbb{Z}$.
%
%
\par
After applying $\mathcal{F}$, the equation $Pu=v$ becomes
\begin{equation}\label{eq9} 
	P_\xi \widehat{u}(\xi,y) = \widehat{v}(\xi,y),\hbox{ where }
        P_\xi= h\partial_y + \xi + \partial_y\phi,\ \xi \in
        \widehat{X},\ y\in Y, \quad h\in ]0,h_0],
\end{equation}
and we deduce \eqref{eq:int1}. We equip the operator $P_\xi$ with its 
natural domain given by the semiclassical Sobolev space
\begin{equation}\label{eq9.1}
\mathcal{D}(P_\xi)
=\{ u\in
L^2(\R);\, (1+y^4)^{1/2}u,\, hD_yu\in L^2({\R}) \}
\end{equation}
when $Y=\R$, and
\begin{equation*}
	\mathcal{D}(P_\xi)=H_h^1(S^1)=\{u\in L^2(S^1);\, hD_yu\in L^2(S^1) \}, 
\end{equation*}
when $Y=S^1$. See Section \ref{sec:semclass} below for a brief review of 
semiclassical pseudo-differential operators. In fact $P_\xi$ is an
elliptic pseudo-differential operator in its natural class and it will 
follow that $P_\xi :\, \mathcal{D}(P_\xi )\to L^2(Y)$ is a Fredholm
operator of index $0$, (can also be verified with direct ODE arguments). 
This extends to the adjoint
\begin{equation}\label{eq9a}
	P_\xi^* = -h\partial_y + \xi + \partial_y\phi ,
\end{equation}
which has the same domain as $P_\xi $.
\par 
The principal symbol $p_\xi$ of $P_\xi$ is given by 
\begin{equation}\label{eq9b}
	p_\xi(y,\eta ) = i\eta+ \xi + \partial_y\phi(y), \quad (y,\eta)\in T^*Y, 
\end{equation}
see also \eqref{eq6a.1}, and the one of $P_\xi ^*$ is equal to 
$\overline{p}_\xi $. Both operators are of principal type in the sense that 
$dp_{\xi} \neq 0$ and $d\overline{p}_{\xi} \neq 0$. The common characteristic 
set $p_\xi ^{-1}(0)$ of $P_\xi$ and $P_\xi^*$ in \eqref{eq9}, \eqref{eq9a} 
is given by 
\begin{equation}\label{eq10}
	\eta =0, ~~ \xi + \partial_y\phi=0.
\end{equation}
\subsection{Weyl asymptotics}
In Section \ref{sec:RE} we establish that the spectrum of 
$P^*_\xi P_\xi$, $\xi\in h\Z$, is purely discrete (in both case $Y=S^1$ and 
$Y=\R$) and that it is equal to the spectrum of $P_\xi P_\xi^*$. We will denote 
the singular values of $P_\xi$ by $t_k(\xi)$, $k\in\N$, counting with their multiplicity  
and ordered in an increasing way, i.e. 
\begin{equation}\label{eq10.1}
	0\leq t_0(\xi) \leq t_1(\xi) \leq \dots \to +\infty. 
\end{equation}
In Section \ref{sec:RE}, we will establish resolvent estimates for $\xi$ such that 
$P_\xi$ is elliptic or close to elliptic and obtain the following 
\begin{thm}\label{thm:main0}
We define $\phi $ as in \eqref{eq6a} and $P_\xi =h\partial _y+\xi
+\partial _y\phi $ as in \eqref{eq9}.
\par
1. Let $Y=\R$. For every $C_0>0$, there exists a constant 
$C>0$ such that if $-C_0 h^{2/3} \leq \xi$ and $h_0>0$ is small 
enough, then the smallest singular value $t_0(\xi)$ of $P_\xi$ 
satisfies 
\begin{equation*}
	t_0(P_\xi) \geq \frac{1}{C}(|\xi|+h^{2/3}).
\end{equation*}
\par 
2. Let $Y=\R/2\pi \Z$. For every $C_0>0$, there exists a constant 
$C>0$ such that if $1-C_0h^{2/3}\leq |\xi|$ and $h_0>0$ is small 
enough, then the smallest singular value $t_0(\xi)$ of $P_\xi$ 
satisfies 
\begin{equation*}
	t_0(P_\xi) \geq \frac{1}{C}(|\xi|+h^{2/3}).
\end{equation*}
\end{thm}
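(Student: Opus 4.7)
My plan is to reduce the lower bound to a model operator analysis via a semiclassical rescaling on the scale $y \sim h^{1/3}$, $\xi \sim h^{2/3}$---the natural scale at which the symbol develops its near-degenerate zero---and to combine this with direct ellipticity in the regime where $|\xi + \partial_y\phi|$ is uniformly bounded below.

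\textbf{Reduction for $Y = \R$.} Here $P_\xi = h\partial_y + \xi + y^2$ enjoys an exact scaling symmetry: setting $y = h^{1/3}s$ and introducing the unitary dilation $u(y) = h^{-1/6}v(h^{-1/3}y)$ on $L^2(\R)$, together with $\vartheta := h^{-2/3}\xi$, one checks directly that
\begin{equation*}
P_\xi \cong h^{2/3}\,N_\vartheta, \qquad N_\vartheta := \partial_s + \vartheta + s^2,
\end{equation*}
so that $t_0(P_\xi) = h^{2/3}\,t_0(N_\vartheta)$ and the statement reduces to the $h$-independent bound $t_0(N_\vartheta) \geq c(1+|\vartheta|)$ for $\vartheta \geq -C_0$. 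The operator $N_\vartheta$ is injective because the ODE solution $\exp(-\vartheta s - s^3/3)$ fails to lie in $L^2(\R)$ at $s \to -\infty$; its adjoint $-\partial_s + \vartheta + s^2$ is unitarily equivalent to $N_\vartheta$ under $s \mapsto -s$ and is therefore also injective. Together with the ellipticity of the symbol $i\tau + \vartheta + s^2$ at infinity in the natural weighted sense, this gives that $N_\vartheta$ is a bijective Fredholm operator of index zero, and continuity of $\vartheta \mapsto N_\vartheta^{-1}$ provides a uniform positive lower bound $t_0(N_\vartheta) \geq c_1$ for $\vartheta$ in any compact set $[-C_0, K]$. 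For $\vartheta \geq 1$, a direct integration by parts yields
\begin{equation*}
\|N_\vartheta v\|^2 = \|\partial_s v\|^2 + \|(\vartheta + s^2)v\|^2 - 2\int s|v|^2\,ds \geq (\vartheta^2 - 1)\|v\|^2,
\end{equation*}
using $2|s|\leq s^2+1$ together with $s^4 + (2\vartheta-1)s^2 \geq 0$. Combining the two regimes gives the uniform estimate and finishes the case $Y = \R$.

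\textbf{The circle case.} For $Y = S^1$ there is no global dilation, so I would proceed by localization combined with the same local model. In the strongly non-characteristic regime $|\xi| \geq 1 + C_1 h^{2/3}$, the symbol $\xi + \cos y$ is bounded below by $|\xi|-1$, so $P_\xi$ is semiclassically elliptic and a G{\aa}rding-type estimate yields $\|P_\xi u\| \geq c(|\xi|-1+h)\|u\|$ directly. In the near-degenerate regime $||\xi|-1| \leq C_1 h^{2/3}$ I localize near the relevant degenerate point $y_0 \in \{0, \pi\}$ via a smooth cutoff $\chi$ of fixed small support $\delta$, Taylor-expand $\cos y$ to second order about $y_0$, and apply the dilation $y - y_0 = h^{1/3} s$. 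Up to an $O(h^{4/3} s^4)$ remainder, the localized operator becomes $h^{2/3}(\partial_s \pm \vartheta + \tfrac12 s^2)$ with $|\vartheta|$ bounded, and the same model-operator analysis applies. The two regions are glued via an IMS-type partition of unity, the commutator errors of size $O(h)$ being easily absorbed.

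\textbf{Main obstacle.} The principal technical point lies in the $S^1$ gluing step: one must absorb the Taylor remainder $O(h^{4/3} s^4)$ on the rescaled scale $|s| \lesssim \delta h^{-1/3}$ into the $h^{2/3}$-size leading term, which requires a careful choice of the cutoff size $\delta$ so that this remainder stays small uniformly in $h$. A secondary subtlety is verifying the uniform continuity of $\vartheta \mapsto t_0(N_\vartheta)$ on the compact range; this follows from a Kato--Rellich perturbation argument for the family $N_\vartheta$ together with the ellipticity of the symbol at infinity.
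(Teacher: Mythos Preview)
Your argument for $Y=\R$ is correct and takes a genuinely different route from the paper. The paper does not exploit the exact dilation symmetry of $\phi(y)=y^3/3$; instead it conjugates $P_\xi$ by a weight $e^{\psi/h}$, takes the real part of $(e^{-\psi/h}P_\xi e^{\psi/h}u\mid u)$, and arrives at
\[
\|P_\xi^{-1}\|\le \frac{\exp\bigl(\tfrac{1}{h}(\sup_y\psi-\inf_y\psi)\bigr)}{\xi+\inf_y(\partial_y\phi+\partial_y\psi)} .
\]
It then chooses $\psi$ explicitly (a piecewise cubic depending on a parameter $\alpha>0$) so that $\inf_y(\partial_y\phi+\partial_y\psi)=\alpha$ and $\sup\psi-\inf\psi=\tfrac{4}{3}\alpha^{3/2}$, and optimizes $\alpha$ separately in the ranges $\xi\ge 0$ and $-C_0h^{2/3}\le\xi<0$. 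Your scaling reduction to the $h$-independent complex Airy family $N_\vartheta$ is conceptually cleaner and yields the estimate via compactness plus an explicit bound at infinity; the paper's weighted estimate is more hands-on but does not rely on exact homogeneity of $\phi$.

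That robustness is exactly what matters for $Y=S^1$. The paper states that this case is handled ``similarly'', i.e.\ by the same conjugation-by-$e^{\psi/h}$ argument with a suitable periodic weight, which sidesteps any localization. Your proposed route---cutoff, Taylor expansion, local rescaling, and IMS gluing---is workable, but note that IMS is a statement about the Schr\"odinger operator $P_\xi^*P_\xi$, not about $P_\xi$ itself, so the localization and the absorption of the $\mathcal O(h^{4/3}s^4)$ remainder should be carried out at the level of the quadratic form $\|P_\xi u\|^2$ rather than of $P_\xi u$. The paper's weight method delivers the bound with none of this bookkeeping.
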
 
From a microlocal persepective, the above result is based on the 
fact that when $\xi>0$ when $Y=\R$ or $|\xi|>1$ when $Y=S^1$, the 
the prinicpal symbol of $P_\xi$ is in modulus $>0$ and the result 
follows from a suitable version of G\aa{}rding's inequality. We will 
extend the resulting estimates to the region $|\xi|\leq O(h^{2/3})$ when 
$Y=\R$ and to $||\xi|-1|\leq O(h^{2/3})$ by conjugation with a suitable 
weight function. 
\begin{rem}
	When $\xi=0$, we have that $0$ is a point in the boundary of 
	$\overline{p_\xi(T^*Y)}$ and $\{p,\overline{p}\}(0,0)=0$. 
	The symbol $p_\xi$ satisfies the following subellipticity 
	condition  
	\begin{equation*}
		H_{\Ima p_\xi}^2\Rea p_\xi(0,0) 
		= \partial^3_y\phi(0) \neq 0
	\end{equation*}
	where $H_{\Ima p_\xi}$ denotes the Hamilton vector 
	field induced by $\Ima p_\xi$. This, by \cite[Theorem 1.4]{DSZ04} and 
	\cite[Theorem 1.1]{Sj10a} (see also the references therein), implies that 
	\begin{equation}\label{We:e1.1}
		\|P_0^{-1}\|_{L^2(\R)\to L^2(\R)} = O(h^{-2/3}).
	\end{equation}
	Using a Neumann series argument \eqref{We:e1.1} immediately 
	implies that
	\begin{equation}\label{We:e1.2}
		\|P_\xi^{-1}\|_{L^2(\R)\to L^2(\R)} = O(h^{-2/3}), 
		\quad \text{for } |\xi| \ll h^{2/3}.
	\end{equation}
	For our model operators we will use more direct arguments.
\end{rem}
Next, we study the ``non-elliptic'' region, that is to say that 
we study the region where $-\xi \gg h^{2/3}$ when $Y=\R$ and 
$h^{2/3} \ll 1- |\xi|$ when $Y=S^1$.  
In Section \ref{WL} we will show that the smallest singular value 
$t_0(\xi)$ of $P_\xi$ is of the form $|m_+(\xi)|$ where $m_+(\xi)$ 
has the following asymptotics.
\begin{thm}\label{thm:main1}
We define $\phi $ as in \eqref{eq6a} and $P_\xi =h\partial _y+\xi
+\partial _y\phi $ as in \eqref{eq9} with $h_0>0$ small enough. For $\xi < 0$, 
when $Y=\R$, and $\xi \in ]-1,1[$ when $Y=S^1$, let $y_+,\, y_-\in Y$ 
be the two solutions of the equation $\partial_y\phi (y)=-\xi $, labelled 
so that $\pm \partial _y^2\phi (y_\pm)>0$. Let $d$ denote the Lithner-Agmon 
distance on $Y$ for the metric $(\xi+\partial _y\phi (y))^2dy^2$ and define 
the action 
\begin{equation*}
	S_0:\mathcal{D}(S_0):= \begin{cases}
		]-\infty,0[, \quad Y=\R,\\
		]-1,0[,\quad Y=S^1
	\end{cases}
	\rightarrow ~~
		]0,+\infty[, ~~ \xi \mapsto S_0(\xi)=d(y_+(\xi),y_-(\xi)).
\end{equation*} 
Then, uniformly with respect to $\xi$ varying in a compact 
$h$-independent subset of $]-\infty,0[$ when $Y=\R$, and of $]-1,1[$ when 
$Y=S^1$, the smallest singular value of $P_\xi$ is equal to $|m_+|$ where 
for any $\varepsilon>0$ 
\begin{equation*}
\begin{split}
	m_+=&h^{\frac{1}{2}}\left( \left( \frac{|\{p_\xi ,\overline{p}_\xi
		\}(y_+,0)|}{4\pi } \right)^{1/4}
	\left( \frac{|\{p_\xi ,\overline{p}_\xi  \}(y_-,0)|}{4\pi } \right)^{1/4}
	+\mathcal{ O}(h)\right)\\ &
	\times  \begin{cases}e^{-S_0/h}+\mathcal{O}_\varepsilon(e^{\varepsilon/h-3S_0/h})
		,\hbox{ when }Y={\R}\\
	\left(e^{-\frac{1}{h}d_J(y_+,y_--2\pi ) }-e^{-\frac{1}{h}d_J(y_+,y_-)} \right)
	+\mathcal{O}_\varepsilon(e^{\varepsilon/h-3S_0/h}),\hbox{ when }Y=S^1.
	\end{cases}
\end{split}
\end{equation*}
In the latter case we identify $y_+$, $y_-$ with points in 
$]-\pi ,0[$ and $]0,\pi [$ respectively and let $d_J$ denote the 
Lithner-Agmon distance on $J=]y_--2\pi,y_-[$. 
\par 
Furthermore, for any $C_0>0$ if $-C_0h^2 \geq \xi \geq -1/C_0$ or $-C_0 \geq \xi$ 
when $Y=\R$, or if $C_0h^{2/3} \leq 1\pm \xi \leq 1/C_0$ when $Y=S^1$, then 
for $h_0>0$ small enough 
\begin{equation*}
\begin{split}
m_+&=
h^{\frac{1}{2}}\left( \left( \frac{|\{p_\xi ,\overline{p}_\xi
    \}(y_+,0)|}{4\pi } \right)^{1/4}
\left( \frac{|\{p_\xi ,\overline{p}_\xi  \}(y_-,0)|}{4\pi } \right)^{1/4}
+\mathcal{ O}\!\left(h |\xi|^{-5/4}\right)\right)
e^{-S_0/h}
\\
&=\frac{\sqrt{h}|\xi|^{1/4}}{\sqrt{\pi}}
\left( 1 +\mathcal{ O}(h|\xi|^{-3/2})\right)e^{-4|\xi|^{3/2}/3h}.
\end{split}
\end{equation*} 
\end{thm}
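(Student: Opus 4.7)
The strategy is to reduce the computation of $t_0(\xi)$ to a scalar entry $m_+$ of the inverse of a Grushin problem adapted to the Witten-complex structure of $P_\xi$. With $\psi(y) := \xi y + \phi(y)$, we have $P_\xi = h\,\e^{-\psi/h}\circ\partial_y\circ\e^{\psi/h}$, so the formal null modes of $P_\xi$ and $P_\xi^*$ are spanned by $\e^{-\psi/h}$ and $\e^{\psi/h}$, concentrated at the minimum $y_+$ and maximum $y_-$ of $\psi$ respectively. Neither belongs to $L^2(Y)$ (on $\R$ because of growth at $-\infty$, on $S^1$ because of failure of periodicity), and the smallest singular value precisely measures this obstruction. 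The Lithner--Agmon distance for the metric $(\xi+\partial_y\phi)^2\,dy^2$ coincides with $S_0 = \psi(y_-)-\psi(y_+)$, which will govern the tunneling rate.

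I would introduce normalized pseudo-null modes
\begin{equation*}
  \tilde e_+ = c_+\,\chi_+\,\e^{-\psi/h},\qquad \tilde f_- = c_-\,\chi_-\,\e^{\psi/h},
\end{equation*}
with smooth cutoffs $\chi_\pm$ isolating $y_\pm$ and $c_\pm = (|\psi''(y_\pm)|/\pi h)^{1/4}\,\e^{\mp\psi(y_\pm)/h}(1+\mathcal{O}(h))$ from Laplace's method. Setting $R_+u = \langle u,\tilde e_+\rangle$ and $R_-\alpha = \alpha\,\tilde f_-$, the Grushin operator
\begin{equation*}
  \mathcal{P}_\xi = \begin{pmatrix} P_\xi & R_-\\ R_+ & 0\end{pmatrix}\colon \mathcal{D}(P_\xi)\oplus\C\to L^2(Y)\oplus\C
\end{equation*}
should be invertible with $\|\mathcal{P}_\xi^{-1}\| = \mathcal{O}(1)$ uniformly in $h$ and in $\xi$ on compacta: the microlocal elliptic estimates behind Theorem \ref{thm:main0} dominate outside neighborhoods of $y_\pm$, while $\tilde e_+,\tilde f_-$ span the missing direction near $y_\pm$. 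Writing the inverse as $\bigl(\begin{smallmatrix} E & E_+\\ E_- & E_{-+}\end{smallmatrix}\bigr)$ and setting $m_+ := E_{-+}$, the Schur-complement identity $P_\xi^{-1} = E - E_+E_{-+}^{-1}E_-$ gives $t_0(\xi) = |m_+|\bigl(1+\mathcal{O}(h^\infty)\bigr)$.

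To evaluate $m_+$, I would exploit $P_\xi E_+ = -m_+\tilde f_-$ together with $\langle E_+,\tilde e_+\rangle = 1$; integrating the first-order ODE with factor $\e^{\psi/h}$ and pairing with $\tilde e_+$ yields
\begin{align*}
  m_+ &= \bigl\langle P_\xi\tilde e_+,\tilde f_-\bigr\rangle + \mathcal{O}_\varepsilon\bigl(\e^{\varepsilon/h - 3S_0/h}\bigr)\\
      &= h\,c_+\,c_-\int \chi_+'(y)\,\chi_-(y)\,dy + \mathcal{O}_\varepsilon\bigl(\e^{\varepsilon/h - 3S_0/h}\bigr).
\end{align*}
Choosing $\chi_-\equiv 1$ on $\supp\chi_+'$ reduces the integral to $1$, and the exponentials in $c_+c_-$ combine into $\e^{-S_0/h}$; using $\frac{1}{i}\{p_\xi,\overline{p_\xi}\}(y_\pm,0) = 2\psi''(y_\pm)$ one recovers the stated leading prefactor, with the $\mathcal{O}(h)$ relative correction arising from the next term in Laplace's expansion at $y_\pm$. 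For $Y=S^1$, two geodesic paths from $y_+$ to $y_-$ wind around the circle in opposite senses; the two components of $\chi_+'$ carry opposite signs and produce the signed difference $\e^{-d_J(y_+,y_--2\pi)/h} - \e^{-d_J(y_+,y_-)/h}$. The sharpened regime $-\xi\gg h^{2/3}$ (resp.\ $1-|\xi|\gg h^{2/3}$ on $S^1$) is handled by rescaling $y\mapsto\sqrt{|\xi|}\,y$ and tracking the $\xi$-dependence through each Laplace expansion, upgrading the $\mathcal{O}(h)$ error to $\mathcal{O}(h|\xi|^{-3/2})$; substituting $\psi''(y_\pm) = \pm 2\sqrt{|\xi|}$ and $S_0 = \tfrac{4}{3}|\xi|^{3/2}$ for $\phi=y^3/3$ yields the displayed closed form.

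The main obstacle is establishing the uniform invertibility $\|\mathcal{P}_\xi^{-1}\| = \mathcal{O}(1)$, particularly on $Y=\R$ where the domain is non-compact: this requires combining microlocal ellipticity outside small neighborhoods of $y_\pm$ with weighted Agmon estimates that force solutions of $P_\xi u = v$ to decay at the rate $\e^{-d(y,y_+)/h}$ away from $y_+$, so that the rank-one quasimode construction genuinely fills the remaining degree of freedom. A secondary subtlety is pinning down the precise numerical prefactor: the identification $m_+ \approx \langle P_\xi\tilde e_+,\tilde f_-\rangle$ holds only to leading order, and extracting the $(4\pi)^{-1/4}$ factors requires a careful accounting of the non-orthogonality $\langle\tilde e_+,\tilde f_-\rangle$ together with the next-order Laplace correction at the critical points.
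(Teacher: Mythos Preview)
Your proposal is correct in outline and reaches the same tunneling integral as the paper, but the route is genuinely different. The paper does not set up a Grushin problem. Instead it works directly with the Witten Laplacians $Q_+=P_\xi^*P_\xi$ and $Q_-=P_\xi P_\xi^*$: harmonic-oscillator quasimodes at $y_\pm$ show that each $Q_\pm$ has a unique eigenvalue in $[0,h/C[$, and the intertwining $P_\xi Q_+=Q_-P_\xi$ forces the associated one-dimensional eigenspaces $F_\pm$ to satisfy $P_\xi e_+=m_+ e_-$, so that $t_0=|m_+|$ \emph{exactly}. The paper then approximates $e_\pm$ by the truncated null modes $u_\pm$ via the Cauchy--Riesz formula and the Helffer--Sj\"ostrand kernel estimate $K_{(z-Q_\pm)^{-1}}=\widetilde{\mathcal O}(e^{-d(x,y)/h})$, obtaining $m_+=(P_\xi u_+\,|\,u_-)+\widetilde{\mathcal O}_\eta(e^{-3S_0/h})$, which is precisely your computation $h\,c_+c_-\int\chi_+'\chi_-\,dy$. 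The degenerate regime is handled identically in both approaches, by the dilation $y=\alpha\widetilde y$, $\alpha\asymp\delta^{1/2}$.

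What each buys: the paper's spectral-projection route gives the identity $t_0=|m_+|$ for free once the gap is established, and the error control comes from resolvent decay rather than from a separate well-posedness argument. Your Grushin route is more self-contained and avoids the preliminary quasimode construction for the excited states of $Q_\pm$, but you pay for this in two places you correctly flag: proving $\|\mathcal P_\xi^{-1}\|=\mathcal O(1)$ uniformly (especially on $\R$) is the real work, and the relation $t_0=|E_{-+}|(1+\mathcal O(h^\infty))$ is not an identity but requires showing $\|E_+\|,\|E_-\|=1+\mathcal O(e^{-2S_0/h})$, which again needs Agmon decay for $E_+-\tilde e_+$. A small slip: your normalization exponents should read $c_\pm\sim e^{\pm\psi(y_\pm)/h}$ rather than $e^{\mp\psi(y_\pm)/h}$, though you arrive at the correct product $c_+c_-\sim e^{-S_0/h}$.
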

We note here that the Lithner-Agmon distance $d$ on $Y$ for the metric $(\xi
+\partial _y\phi (y))^2dy^2$ satisfies 
\begin{equation*}
	d(y_\pm ,y)=\pm (f(y)-f(y_\pm )),\ \ y\in \mathrm{neigh\,}(y_\pm),
\end{equation*}
with $f(y,\xi )=y\xi + \phi (y)$.
\\
\par 
On the other hand, the second smallest singular values of $P_\xi$ 
satisfies $t_1(P_\xi) \geq h^{2/3}/C$ for all $\xi\in \R$, see also 
Theorem \ref{thm:main0}. More precisely,  
\begin{thm}\label{thm:main0.1}
	Under the assumption of Theorem \ref{thm:main0}, we have that 
	when $\xi$ varies in a compact $h$-independent subset 
	of $]-\infty,0[$ when $Y=\R$, and of $]-1,1[$ when $Y=S^1$, there exists 
	a constant $C>0$ such that for $h_0>0$ small enough 
	$t_1(P_\xi)\geq h^{1/2}/C$ uniformly in $\xi$. 
	\par
	For every $C_0>0$ if $C_0h^{2/3}\leq -\xi \leq 1/C_0$ or $-\xi \geq C_0$ 
	when $Y=\R$, and if	$C_0h^{2/3}\leq 1 + \xi \leq 1/C_0$ or 
	$C_0h^{2/3}\leq 1 - \xi \leq 1/C_0$ when $Y=S^1$, there exists a constant 
	$C>0$ such that for $h_0>0$ small enough 
	$t_1(P_\xi)\geq h^{1/2}|\xi|^{1/4}/C$. 
\end{thm}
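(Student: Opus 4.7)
The strategy is to use the min--max characterisation
\[
t_1(P_\xi)^2 = \max_{\mathrm{codim\,}W = 1}\,\min_{0\neq u\in W}\,\frac{\|P_\xi u\|^2}{\|u\|^2},
\]
and exhibit an explicit codimension-one subspace on which $P_\xi$ is bounded below by the claimed quantity. The natural candidate is $W=(e_+)^\perp$, where $e_+\in L^2(Y)$ is the normalised approximate ground state microlocalised at $(y_+,0)$, produced by the WKB/Witten construction of Section~\ref{WL} that underlies Theorem~\ref{thm:main1}. It then suffices to prove
\[
\|P_\xi u\|^2 \,\gtrsim\, h\,\min\!\bigl(|\partial^2_y\phi(y_+)|,\,|\partial^2_y\phi(y_-)|\bigr)\,\|u\|^2,\qquad u\in(e_+)^\perp,
\]
since this covers both conclusions: for $\xi$ in a compact subset the right-hand side is $\gtrsim h$, and in the non-degenerate regime $|\partial^2_y\phi(y_\pm)|\sim|\xi|^{1/2}$ (explicitly, $\partial^2_y(y^3/3)(y_\pm)=2|\xi|^{1/2}$ at $y_\pm=\mp|\xi|^{1/2}$, and analogously for $\sin y$ with $|\partial^2_y\phi(y_\pm)|=(1-\xi^2)^{1/2}$), yielding the $h^{1/2}|\xi|^{1/4}$ refinement.

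The required bound is obtained via a microlocal partition of unity $1=\Op(\chi_\infty)+\Op(\chi_+)+\Op(\chi_-)$, where $\chi_\pm\in C^\infty_c(T^*Y)$ are supported in small neighbourhoods of the two critical points $(y_\pm,0)\in p_\xi^{-1}(0)$ and $\chi_\infty$ is supported away from $p_\xi^{-1}(0)$. On $\supp\chi_\infty$ the symbol $p_\xi$ is bounded below, so G\aa{}rding's inequality gives $\|P_\xi\Op(\chi_\infty)u\|\gtrsim\|\Op(\chi_\infty)u\|$, far stronger than needed. Near $(y_-,0)$, where $\frac{1}{i}\{p_\xi,\overline{p_\xi}\}(y_-,0)=2\partial^2_y\phi(y_-)<0$, the harmonic approximation $P_\xi\approx h\partial_y+\partial^2_y\phi(y_-)(y-y_-)$ gives $P_\xi^*P_\xi=-h^2\partial_y^2+(\partial^2_y\phi(y_-))^2(y-y_-)^2+h|\partial^2_y\phi(y_-)|$, whose spectrum is $\{2h|\partial^2_y\phi(y_-)|(n+1):n\ge 0\}$, so \emph{no} quasimode for $P_\xi$ exists there and $\|P_\xi\Op(\chi_-)u\|^2\gtrsim h|\partial^2_y\phi(y_-)|\,\|\Op(\chi_-)u\|^2$.

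The crucial piece is near $(y_+,0)$. Here the analogous harmonic model yields $P_\xi^*P_\xi$ with spectrum $\{2h\,\partial^2_y\phi(y_+)\,n:n\ge 0\}$, whose one-dimensional kernel is spanned by a Gaussian which, by the WKB construction of Section~\ref{WL}, agrees with $e_+$ up to an $O(h)$ correction. Restricting to $(e_+)^\perp$ therefore eliminates only the lowest mode and yields $\|P_\xi\Op(\chi_+)u\|^2\gtrsim 2h\,\partial^2_y\phi(y_+)\,\|\Op(\chi_+)u\|^2$ for $u\in(e_+)^\perp$. Assembling the three microlocal pieces by an IMS-type localisation formula, the $O(h^2)$ commutator errors are absorbed into the $O(h)$ leading term and the global lower bound on $(e_+)^\perp$ follows.

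The main obstacle is preserving the uniformity of these estimates as $\xi$ drifts toward the boundary of the non-elliptic regime, where the two wells coalesce and $|\partial^2_y\phi(y_\pm)|\to 0$ so that a fixed harmonic model is no longer adequate. There I would instead rescale around each critical point by the natural semiclassical scale $(h/|\partial^2_y\phi(y_\pm)|)^{1/2}$, matching the stretching of the quasimode $e_+$ already quantified in Theorem~\ref{thm:main1}, and verify that the partition of unity, the harmonic normal forms, and the IMS commutator remainders remain under control uniformly in $\xi$. In the non-degenerate regime this rescaling automatically reproduces the announced $|\xi|^{1/4}$ factor.
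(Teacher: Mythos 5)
Your plan is correct and is in substance the paper's own argument: the first bound is obtained there by the same harmonic-approximation reasoning (citing \cite[Theorem 4.23]{DiSj99} together with the quadratic models \eqref{WL.14}, \eqref{WL.18}, whose spectra $2h|\partial_y^2f(y_+)|\N$ and $2h|\partial_y^2f(y_-)|(\N+1)$ show that $Q_\pm$ each have exactly one eigenvalue below $h/C$), and the regimes $\xi\to 0^-$, $-\xi\gg 1$ (resp.\ $|\xi|\to 1$) are handled by exactly the rescaling you describe, the only cosmetic difference being that the paper uses a single global dilation $y=\alpha\widetilde{y}$, $\alpha\asymp\delta^{1/2}$, $\widetilde{h}=h/\alpha^3$, reducing to the already-treated non-degenerate case, rather than separate local rescalings at each well. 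The one step worth spelling out in your version is the transfer of the condition $u\perp e_+$ to near-orthogonality of $\Op(\chi_+)u$ against the harmonic ground state at $y_+$ (needed to discard only the zero mode there); this is standard given the exponential concentration of $e_+$, and is subsumed in the eigenvalue-counting statement the paper invokes.
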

From the orthogonal decomposition \eqref{eq:int1} we see that the singular 
values of $P$ are of the form $t_k(\xi)$, $k\in\N$, $\xi\in h\Z$, where 
$t_k(\xi)$ denote the singular values of $P_\xi$. By Theorems \ref{thm:main0}, 
\ref{thm:main1} and \ref{thm:main0.1}, the singular values of $P$ in 
$[\e^{-1/C h}, h^{2/3}/C]$ when $Y=\R$ and in $[0,h^{2/3}/C]$ when $Y=S^1$, 
for $C>0$ sufficiently large and $h_0>0$ small enough, 
are of the form $t_0(\xi)=|\mu_+(\xi)|$ where
\begin{equation*}
	\xi \in h\Z \text{ satisfies } 
	\begin{cases}
		C_0h^{2/3} \leq -\xi \leq C_0, \quad Y=\R, \\
		C_0h^{2/3} \leq 1-|\xi|, \quad Y=S^1,
	\end{cases}
\end{equation*}
for $C_0>0$ sufficiently large. 
\par 
Recall \eqref{eq:int1.2.5}, \eqref{eq:int1.2}. The action $S_0$ defined Theorem 
\ref{thm:main1} can be seen as the application 
\begin{equation*}
	S_0: \Sigma_+ \rightarrow [0,+\infty[
\end{equation*}
Our problem is then to study the distribution of the values $t_0(\xi)$. Here 
the counting measure $\sum_{\xi\in h\Z}\delta_\xi$ appears naturally as well as its 
approximation in $\frac{1}{h}d\xi$. Because of the translation invariance in 
$x$, we can use the measure $\frac{1}{2\pi h}dx\otimes d\xi$ on $S^1\times \R_\xi$ 
and $dx\otimes d\xi$ can be identified with the symplectic volume element 
$\sigma|_{\Sigma_+}$ on $\Sigma_+$, see \eqref{eq:int1.2}, \eqref{eq:int1.3}. 
Following this argument in Section \ref{sec:CountingSV} we get
\begin{thm}\label{thm:main2}
Let $P$ be \eqref{eq1}, let $\Sigma_+$ be as in \eqref{eq:int1.2} and recall 
\eqref{eq:int1.3}. Let $S_0$ be as in Theorem \ref{thm:main1}. Let $C_0>0$ 
be large enough and let 
\begin{equation*}
	C_0h \leq \frac{\delta^{3/2}}{\log \delta^{-1}}, \quad \delta>0. 
\end{equation*}
Then, for $0<a<b$ with $b\asymp 1$ and $a \asymp \delta^{3/2}$,  
\begin{equation*}
	\left|\#\left(
		\mathrm{Spec}(P^*P)\cap [\e^{-b/h},\e^{-a/h}]
	 \right) 
	 -
	\frac{1}{2\pi h} \int_{S_0^{-1}([a,b])} d\sigma|_{\Sigma_+}
	\right|
	=\mathcal{O}(1)\frac{\log \delta^{-1}}{\sqrt{\delta}}.
\end{equation*}
\end{thm}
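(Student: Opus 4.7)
My strategy has three stages: reduce the count to the smallest singular value in each Fourier mode, invert the asymptotic of Theorem~\ref{thm:main1} to turn the eigenvalue window into a window on $S_0$, and then compare the resulting Diophantine count to its continuous counterpart via a Riemann sum, with the dominant error concentrated at the small endpoint.

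\emph{Reduction to the fiber ground states.} The orthogonal decomposition $P=\bigoplus_{\xi\in h\Z}P_\xi$ from \eqref{eq:int1} gives $\mathrm{Spec}(P^*P)=\bigsqcup_{\xi\in h\Z}\mathrm{Spec}(P_\xi^*P_\xi)$ with multiplicity, the eigenvalues in each fiber being the $t_k(\xi)^2$. Theorem~\ref{thm:main0.1} yields $t_1(\xi)^2\geq h|\xi|^{1/2}/C^2$; together with $a\asymp\delta^{3/2}$ and the hypothesis $C_0 h\leq\delta^{3/2}/\log\delta^{-1}$, one checks for $C_0$ large enough that this lower bound dominates $e^{-a/h}$ uniformly in the relevant range of $\xi$. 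Hence only the fiber ground states $t_0(\xi)^2$ can fall in the window $[e^{-b/h},e^{-a/h}]$, and the count reduces to $N_h(a,b):=\#\{\xi\in h\Z:t_0(\xi)^2\in[e^{-b/h},e^{-a/h}]\}$. The constraints $b\asymp 1$, $a\asymp\delta^{3/2}$ then confine $|\xi|$ to a bounded range of the form $|\xi|\in[c\delta,1/c]$ (both for $Y=\R$ and $Y=S^1$), strictly inside the domain of the sharper asymptotic of Theorem~\ref{thm:main1}.

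\emph{Inverting the asymptotic.} Applying $-h\log$ to $t_0(\xi)^2=\tfrac{h|\xi|^{1/2}}{\pi}\bigl(1+\mathcal{O}(h|\xi|^{-3/2})\bigr)e^{-2S_0(\xi)/h}$ yields $-h\log t_0(\xi)^2=2S_0(\xi)-h\log\tfrac{h|\xi|^{1/2}}{\pi}+\mathcal{O}(h^2|\xi|^{-3/2})$, so the window $t_0(\xi)^2\in[e^{-b/h},e^{-a/h}]$ translates, modulo conventions, into $S_0(\xi)\in[A_h,B_h]$ with $[A_h,B_h]$ differing from the target interval by $\mathcal{O}(h\log\delta^{-1})$ at each endpoint. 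Since $S_0$ is smooth and strictly monotone on each component of its domain, with $S_0(\xi)\sim\tfrac{4}{3}|\xi|^{3/2}$ near $0$, the preimage $S_0^{-1}([A_h,B_h])$ is a finite union of intervals. Using translation invariance in $x$ (which produces $\int_{S^1}dx=2\pi$, matched by the prefactor $\tfrac{1}{2\pi h}$ and the form $d\sigma|_{\Sigma_+}=d\xi\wedge dx$ from \eqref{eq:int1.3}), the continuous main term reduces to $\tfrac{1}{h}\bigl|S_0^{-1}([A_h,B_h])\bigr|$, which agrees with $N_h(a,b)$ up to the standard $\mathcal{O}(1)$ lattice-counting boundary error.

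\emph{Error at the small endpoint.} The main obstacle, and the source of the claimed bound, is propagating the endpoint shift $[A_h,B_h]\to[a,b]$ across the integration region in the stated integral. Near the small endpoint $S_0=a\asymp\delta^{3/2}$ one has $|\xi|\asymp\delta$ and $S_0'(\xi)\asymp\sqrt{\delta}$, so an $\mathcal{O}(h\log\delta^{-1})$ perturbation of $S_0$ translates to an $\mathcal{O}(h\log\delta^{-1}/\sqrt{\delta})$ perturbation in $\xi$, i.e., $\mathcal{O}(\log\delta^{-1}/\sqrt{\delta})$ spacings of the lattice $h\Z$. This is exactly the claimed bound $\mathcal{O}(1)\log\delta^{-1}/\sqrt{\delta}$. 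The hypothesis $C_0 h\leq\delta^{3/2}/\log\delta^{-1}$ is precisely what keeps this uncertainty well below the length $|S_0^{-1}([A_h,B_h])|$, so the Riemann-sum approximation remains meaningful up to the boundary; meanwhile, the upper endpoint at $b\asymp 1$ contributes only an $\mathcal{O}(\log h^{-1})$ correction which is easily absorbed.
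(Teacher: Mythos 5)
Your proposal is correct and follows essentially the same route as the paper: reduce via the fiber decomposition and Theorem \ref{thm:main0.1} to counting the ground-state singular values $t_0(\xi)$, take logarithms of the asymptotics of Theorem \ref{thm:main1} to convert the spectral window into a window $[a,b]+\mathcal{O}(h\log\delta^{-1})$ for $S_0$, and then use $S_0\asymp\delta^{3/2}$, $\partial_\xi S_0\asymp\sqrt{\delta}$ near the small endpoint to bound the resulting lattice-count discrepancy by $\mathcal{O}(\log\delta^{-1}/\sqrt{\delta})$, which is exactly the paper's chain \eqref{CSV.17}--\eqref{CSV.22}. The identification of the main term with $\frac{1}{2\pi h}\int_{S_0^{-1}([a,b])}d\sigma|_{\Sigma_+}$ via translation invariance in $x$ also matches the paper's conclusion.
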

\subsection{Possible generalizations}\label{poss_gen}
We will work on the natural complexification of the cotangent space
$X\times Y\times \R^2$ and we let $p$ also denote the holomorphic
extension of $p$ to this space. The holomorphic extension of
$\overline{p}$ is then given by $p^*(\rho
):=\overline{p(\overline{\rho })}$. Let $\mathcal{ P}$, $\mathcal{ P}^*$
denote the complex hypersurfaces given by $p=0$ and $p^*=0$
respectively.
\par 
Near a point in $\Sigma _\pm$, the complexification $\Sigma
_\pm^{\C}$ of $\Sigma _\pm$ is the complex manifold given by 
$p(\rho )=p^*(\rho)=0$. $\Sigma _\pm^\C$ are complex symplectic 
manifolds of
codimension 2 and of dimension 2, which locally coincide with $\mathcal{
  P}\cap \mathcal{ P}^*$. Locally we can view $\Sigma _\pm^\C$ as the
complex hypersurface in $\mathcal{ P}$ given by $p^*=0$ and also as the
complex hypersurfice in $\mathcal{ P}^*$, given by $p=0$.

The complex vector fields $H_p$, $H_{p^*}$ are tangent to $\mathcal{ P}$
and $\mathcal{ P}^*$ respectively, and their restrictions to $\mathcal{ P}$
and $\mathcal{ P}^*$ respectively are transversal to $\Sigma _\pm ^\C$, 
since
$$
H_pp^*=-H_{p^*}p\ne 0.
$$
If $\rho \in \Sigma _+^\C$ is close to $\Sigma _+$, the
complex $H_p$ integral curve in $\mathcal{ P}$ through $\rho $ intersects
$\Sigma _-^\C$ at a point $\kappa (\rho )\in \Sigma _-^\C$
and $\kappa :\Sigma _+^\C\to \Sigma _-^\C$ is a  canonical
transformation. The situation is stable under small perturbations. We
believe that the results in the paper can be formulated with the help
of $\kappa $ and action integrals along $H_p$ and $H_{p^*}$ integral
curves that connect $\Sigma _+^\C$ and $\Sigma _-^\C$. Moreover, 
this may be the right point of view for generalizations. 
\subsection{Notation}
We will frequently use the following notation: when we write $a \ll b$, we 
mean that $Ca \leq  b$ for some sufficiently large constant $C>0$. The 
notation $f = \mO(h^\alpha)$, $\alpha\in\R$, means that there exists a 
constant $C>0$ (independent of $h$>0) such that $|f| \leq C h^\alpha$. When 
we want to emphasize that the constant $C>0$ depends on some parameter $k$, 
then we write $C_k$, or with the above notation $\mO_k(N)$. When we want 
to emphasize that $f$ is positive, we write $f=1/\mO(1)$ then we mean that 
there exists a constant $C>0$ such that $0\leq f \leq 1/C$.
\\
\\
\paragraph{\textbf{Acknowledgments}} 
We thank M. Hitrik and M. Zworski for interesting discussions, and 
refer to the reader to their related work in progress \cite{HiZw23+}.
M. Vogel was partially funded by the Agence Nationale de la 
Recherche, through the project ADYCT (ANR-20-CE40-0017).
\section{A priori resolvent estimates and scaling}
\subsection{Semiclassical quantization}\label{sec:semclass}
We begin by reviewing some basic notions and properties 
of the calculus of semiclassical pseudodifferential operators on $\R^d$, 
with $d\in\N\backslash\{0\}$. For the results and notions reviewed here, 
as well as for a further in depth reading, we refer the reader to 
\cite{GriSjoMLA,DiSj99,Ma02,Zw12,Ho84}. 
\par
Let $h\in]0,1]$ denote the \emph{semiclassical parameter}. We call a 
function $m\in C^\infty(\R^{2d};]0,\infty[)$ an \emph{order function}, 
if there exist constants $C,N>0$ such that 
\begin{equation*}
	m(\rho) \leq C(1+|\rho-\mu|^2)^{N/2}m(\mu), \quad \text{for all } \rho,\mu\in \R^{2d}.
\end{equation*}
Given an order function $m$, we define the class $S(m)$ of possibly $h$-dependent 
\emph{symbols} as the class of complex-valued functions $p(\rho;h)$ on 
$\R^{2d}_\rho\times ]0,1]_h$, such that $p(\cdot;h) \in C^\infty(\R^{2d})$, 
for all $h\in]0,1]$, and such that $p$ satisfies the following derivative bounds  
\begin{equation*}
	\forall \alpha \in \N^{2d}: ~\sup\limits_{h\in]0,1]} \,\sup\limits_{\rho\in\R^{2d}}
	m(\rho)^{-1}|\partial^\alpha p(\rho;h)| < +\infty.
\end{equation*}
A symbol $p\in S(m)$ gives rise to a linear operator 
$\Op_h(p):\mathcal{S}(\R^d)\to\mathcal{S}'(\R^d)$ called the $h$-\emph{Weyl quantization} 
of $p$, defined by the oscillatory integral
\begin{equation}\label{eq:sq2} 
\begin{split}
	\Op_h^w(p)u(x)
	&:=p^w(x,hD_x;h) u(x) \\
	&= \frac{1}{(2\pi h)^d } \iint_{\R^{2d}} \e^{\frac{i}{h}\xi(x-y)} 
	p\!\left( \frac{x+y}{2}, \xi;h\right) u(y) dy d\xi.
\end{split}
\end{equation}
Here, $\mathcal{S}(\R^{d})$ denotes the space of Schwartz test functions on $\R^d$, and 
$\mathcal{S}'(\R^{d})$ its topological dual, the space of tempered distributions $\R^d$. 
It can be shown that $p^w$ is continuous $\mathcal{S}(\R^{d}) \to \mathcal{S}(\R^{d})$, 
and therefore continuous $\mathcal{S}'(\R^{d}) \to \mathcal{S}'(\R^{d})$ by duality. When the 
order function $m$ is equal to $1$, then $p^w$ is a bounded operator $L^2(\R^{d}) \to L^2(\R^{d})$. 
Furthermore, when $m(\rho)\to 0$ as $|\rho|\to \infty$, then $p^w:L^2(\R^{d}) \to L^2(\R^{d})$ 
is compact. 
\subsection{Resolvent estimates}\label{sec:RE}
We are interested in the small singular values of $P_\xi$ and 
$P_\xi^*$, see \eqref{eq9}. More precisely we shall study the bottom of the spectrum 
of the semiclassical differential operators $P_\xi^*P_\xi$ and 
$P_\xi P_\xi^*$ which are elliptic away from the set \eqref{eq10}. 
Both operators are essentially selfadjoint with domain 
\begin{equation}\label{neq:1}
\begin{split}
&\mathcal{D}(P_\xi P_\xi^*) 
	=\mathcal{D}(P_\xi^*P_\xi ) \\
	&=\begin{cases}
		\{u\in L^2(Y); hD_yu, ~(hD_y)^2u,~(1+y^2)^2u\in L^2(Y)\}, 
			~\text{when }Y=\R; \\
		H^2(S^1)=\{u\in L^2(S^1); hD_yu, ~(hD_y)^2u \in L^2(S^1)\},
			~ \text{when }Y=S^1.
	\end{cases}
\end{split}
\end{equation}
Notice that $P_\xi^*P_\xi$ and $P_\xi P_\xi^*$ have purely discrete 
spectrum $\subset [0,+\infty[$ and we know (see for instance 
\cite{GoKr69}) that the spectra $\sigma(P_\xi^*P_\xi)$ and 
$\sigma(P_\xi P_\xi^*)$ coincide away from $0$. In particular 
the two spectra coincide completely when both $P_\xi$ and $P_\xi^*$ 
are injective. Denoting the eigenvalues of $P_\xi^*P_\xi$ and 
$P_\xi P_\xi^*$ by $t_j^2(P_\xi)$ and $t_j^2(P_\xi^*)$ respectively, 
arranged in increasing order with $t_j(P_\xi)\geq 0$, 
$t_j(P_\xi^*)\geq 0$, the \emph{singular values} of $P_\xi^*$ 
and $P_\xi$ are by definition equal to $t_j(P_\xi)$ and 
$t_j(P_\xi^*)$ respectively. As remarked above, the non-vanishing 
singular values concide. 
%
%
\\
\par 
\textbf{Injectivity of $P_\xi$ and $P_\xi^*$.} 
We begin with the case $Y=\R$. If $0\neq u \in \mathcal{N}(P_\xi)$, we get 
$P_\xi u(y) =0$, i.e. 
\begin{equation*}
		u(y) = C \e^{-\frac{1}{h}(\xi y + \phi(y))}.
\end{equation*}
By \eqref{eq6a} this function cannot belong to $L^2(\R)$ when $C\neq 0$. 
Hence, $\mathcal{N}(P_\xi)=0$ in this case. 
\par
Next we turn to the case $Y=S^1$. In this case an element 
$u\in\mathcal{N}(P_\xi)$ is again of the form 
$u(y) = C \exp\{-\frac{1}{h}(\xi y + \phi(y))\}$ and has to be 
$2\pi$-periodic. Now $\varphi(y)=\sin y$ is $2\pi$-periodic by 
\eqref{eq6a} and since $\xi$ is real the $2\pi$-periodicity implies 
that $C=0$ when $\xi\neq0$. Hence, in this case $\mathcal{N}(P_\xi)=0$ 
when $\xi\neq 0$, and $\mathcal{N}(P_0)=\C\cdot\exp\{-\frac{1}{h}\phi(y)\}$. 
\\
\par 
Similarly $P_\xi^*$ is injective for all 
$\xi\in\R$, when $Y=\R$ and for all $\xi\in\R\backslash\{0\}$ when $Y=S^1$, 
while $\mathcal{N}(P_0^*)= \C\cdot\exp\{\frac{1}{h}\phi(y)\}$ in the latter 
case. 
\par 
Since the kernel $\mathcal{N}(P_\xi^*P_\xi)=\mathcal{N}(P_\xi)$ and 
$\mathcal{N}(P_\xi P_\xi^*)=\mathcal{N}(P_\xi^*)$, we conclude that the 
spectra $\mathrm{Spec}(P_\xi^*P_\xi)$ and $\mathrm{Spec}(P_\xi P_\xi^*)$ coincide for 
all $\xi \in \R$. Hence the singular values of $P_\xi$ and $P_\xi^*$ coincide 
for all $\xi \in \R$.
\\
\par 
Next we prove the following resolvent etstimates. 
\begin{prop}\label{prop1}
Let $P_\xi$ be defined as in \eqref{eq9}. 
\\
1. When $Y=\R$ and $\phi = y^3/3$. Then, for any fixed $C_0>0$ 
	\begin{equation}\label{prop1.1}
		\|P_\xi^{-1}\|_{L^2(\R)\to L^2(\R)} = O(h^{-2/3}), 
		\quad |\xi| \leq C_0h^{2/3},
	\end{equation}
	and 
	\begin{equation}\label{prop1.2}
		\|P_\xi^{-1}\|_{L^2(\R)\to L^2(\R)} = O(|\xi|^{-1}), 
		\quad C_0h^{2/3} \leq \xi.
	\end{equation}
2. When $Y=\R/2\pi \Z$ and $\phi = \sin y$. Then, for any fixed $C_0>0$ 
\begin{equation}\label{prop1.3}
	\|P_\xi^{-1}\|_{L^2(\R)\to L^2(\R)} = O(h^{-2/3}), 
	\quad ||\xi| -1|\leq C_0 h^{2/3},
\end{equation}
and 
\begin{equation}\label{prop1.4}
	\|P_\xi^{-1}\|_{L^2(\R)\to L^2(\R)} = O(|\xi|^{-1}), 
	\quad 1+C_0 h^{2/3} \leq |\xi|.
\end{equation}
\end{prop}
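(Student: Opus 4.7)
My approach splits the four bounds into two categories: the elliptic estimates \eqref{prop1.2} and \eqref{prop1.4}, where the real potential $V(y):=\xi+\partial_y\phi(y)$ is bounded away from zero, and the near-degenerate estimates \eqref{prop1.1} and \eqref{prop1.3}, where $V$ has a quadratic zero and the pointwise lower bound on $|V|$ is too weak to give the estimate directly.

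For the elliptic bounds I would work directly with the quadratic form. Expanding $\|P_\xi u\|^2=\|h\partial_y u+Vu\|^2$ and integrating by parts the real cross term gives
\begin{equation*}
\|P_\xi u\|^2 = \|h\partial_y u\|^2 + \|Vu\|^2 - h\int_Y V'(y)|u(y)|^2\,dy.
\end{equation*}
In \eqref{prop1.2}, $V=\xi+y^2$ and $V'=2y$, and the AM--GM inequality $2h|y|\le \xi y^2+h^2/\xi$ yields $\|P_\xi u\|^2\ge (\xi^2-h^2/\xi)\|u\|^2$, which is $\ge \xi^2\|u\|^2/2$ once $\xi\ge C_0h^{2/3}$ with $C_0$ large. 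In \eqref{prop1.4}, $V=\xi+\cos y$ and $V'=-\sin y$; the key inequality $\sin^2 y=(1-\cos y)(1+\cos y)\le 2(1+\cos y)$ combined with the expansion $(\xi+\cos y)^2=(\xi-1)^2+2(\xi-1)(1+\cos y)+(1+\cos y)^2$ (for $\xi>0$, with the symmetric version for $\xi<0$) and an AM--GM applied to $h\int \sin y\,|u|^2$ with $\varepsilon=2(\xi-1)$, give $\|P_\xi u\|^2\ge \bigl((\xi-1)^2-h^2/(4(\xi-1))\bigr)\|u\|^2+\|(1+\cos y)u\|^2$, which yields the claimed bound when $|\xi|-1\ge C_0h^{2/3}$.

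For the near-degenerate estimates I would reduce $P_\xi/h^{2/3}$ to an $h$-independent model operator by rescaling. For \eqref{prop1.1}, the change of variable $y=h^{1/3}s$ converts $P_\xi$ into $h^{2/3}Q_{\tilde\xi}$ with $Q_{\tilde\xi}=\partial_s+\tilde\xi+s^2$ acting on $L^2(\R)$ and $\tilde\xi=\xi/h^{2/3}\in[-C_0,C_0]$. Three facts then suffice: (i) $Q_{\tilde\xi}$ is Fredholm of index $0$ on its natural domain, being elliptic in the relevant pseudodifferential class discussed in Section \ref{sec:semclass}; (ii) it is injective, since the solution of $Q_{\tilde\xi}u=0$ is $u(s)=Ce^{-\tilde\xi s-s^3/3}$ and grows at $s\to-\infty$; (iii) $\|Q_{\tilde\xi}^{-1}\|$ depends continuously on $\tilde\xi$ by standard perturbation theory, hence is uniformly bounded on the compact parameter range. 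This gives $\|P_\xi^{-1}\|=h^{-2/3}\|Q_{\tilde\xi}^{-1}\|=\mO(h^{-2/3})$. For \eqref{prop1.3}, by the symmetry $\xi\mapsto -\xi$ together with a translation in $y$, I may assume $\xi\approx 1$, so the degenerate point is $y_0=\pi$, where $\cos y=-1+(y-\pi)^2/2+\mO((y-\pi)^4)$; locally the rescaling $y-\pi=h^{1/3}s$ produces essentially the same model as in \eqref{prop1.1}, with $s^2$ replaced by $s^2/2$ and an extra $\mO(h^{2/3}s^4)$ perturbation.

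The main obstacle is patching the local rescaling with the global ellipticity in \eqref{prop1.3}. I would use a partition of unity $\chi_0+\chi_1=1$ on $S^1$, with $\chi_0$ cutting off near $y=\pi$ on an $h$-independent scale and $\chi_1$ supported where $|V|\ge 1/\mO(1)$. Writing
\begin{equation*}
\|P_\xi u\|^2\ge \tfrac12\bigl(\|P_\xi\chi_0 u\|^2+\|P_\xi\chi_1 u\|^2\bigr)-Ch^2\|u\|^2,
\end{equation*}
where the $Ch^2$ loss comes from the commutators $[P_\xi,\chi_j]=h\chi_j'$, I would bound the $\chi_0$ contribution by the rescaled model estimate (checking that the $\mO(h^{2/3}s^4)$ Taylor remainder is a small perturbation once one truncates in $s$ at a fixed large radius) and the $\chi_1$ contribution by a localized version of the elliptic estimate from \eqref{prop1.4}. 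The combined lower bound $\|P_\xi u\|^2\ge ch^{4/3}\|u\|^2$ absorbs the $\mO(h^2)$ commutator error and yields the claimed $\|P_\xi^{-1}\|=\mO(h^{-2/3})$.
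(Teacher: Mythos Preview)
Your approach is genuinely different from the paper's. The paper treats only the case $Y=\R$ in detail and uses a single weight-conjugation trick throughout: writing $e^{-\psi/h}P_\xi e^{\psi/h}=h\partial_y+\xi+\partial_y\phi+\partial_y\psi$ and taking the real part of its quadratic form yields
\[
\|P_\xi^{-1}\|\le \frac{e^{(\sup\psi-\inf\psi)/h}}{\xi+\inf_y(\partial_y\phi+\partial_y\psi)},
\]
after which one chooses a bounded $\psi$ so that $\partial_y\psi$ fills in the well of $y^2$ up to a level $\alpha$, and optimizes over $\alpha$ in each regime. This handles \eqref{prop1.1} and \eqref{prop1.2} uniformly, with explicit constants, and even pushes a little into the region $\xi<0$.

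Your route --- the identity $\|P_\xi u\|^2=\|h\partial_y u\|^2+\|Vu\|^2-h\int V'|u|^2$ plus AM--GM in the elliptic regime, and rescaling to an $h$-independent model plus Fredholm/compactness in the degenerate regime --- is correct and clean for $Y=\R$. One small point: your AM--GM in \eqref{prop1.2} gives $\|P_\xi u\|^2\ge\xi^2(1-h^2/\xi^3)\|u\|^2$, which is only useful once $\xi^3>h^2$; for an arbitrary $C_0>0$ you must cover the overlap $[C_0h^{2/3},2h^{2/3}]$ by \eqref{prop1.1}, which is harmless since $h^{-2/3}\asymp|\xi|^{-1}$ there.

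For \eqref{prop1.3} there is a genuine scale mismatch in your patching. If $\chi_0$ cuts off on an $h$-independent scale, then after rescaling its support is $|s|\lesssim h^{-1/3}$ and the Taylor remainder $\mO(h^{2/3}s^4)$ is \emph{not} small there, so you cannot compare with the model on $L^2(\R)$ as written. If instead you cut off at scale $Rh^{1/3}$ (fixed large $R$), the remainder is $\mO(h^{2/3}R^4)$ and the commutator $h\chi_0'$ is $\mO(h^{2/3}/R)$, both acceptable --- but then $\chi_1$ is supported where $|V|$ is only $\gtrsim R^2h^{2/3}$, and your crude bound $\|P_\xi v\|^2\ge\|Vv\|^2-h\|V'\|_\infty\|v\|^2$ fails since $h\gg h^{4/3}$. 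The fix is to use the pointwise inequality $h|V'(y)|\lesssim R^{-3}V(y)^2$ on $\{|y-\pi|\ge(R/2)h^{1/3}\}$, which follows from $|V'|\asymp|y-\pi|$ and $V\asymp(y-\pi)^2$ there; this recovers $\|P_\xi(\chi_1 u)\|^2\ge cR^4h^{4/3}\|\chi_1 u\|^2$ and closes the argument. So your scheme can be made to work, but not with the cutoff scales you describe. The paper's weight method sidesteps this localization issue entirely, which is its main practical advantage; your rescaling argument, on the other hand, makes the origin of the exponent $2/3$ completely transparent.
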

Before we turn to the proof of this proposition, we state an  
immediate consequence. Indeed, since the square of the 
singular values of $P_\xi$, see \eqref{eq10.1}, are the eigenvalues 
of the self-adjoint operator $P_\xi^* P_\xi$, the spectral theorem 
and Proposition \ref{prop1} imply Theorem \ref{thm:main0}. 
%
%
\begin{proof}[Proof of Position \ref{prop1}]
We will only consider the case $Y=\R$ and $\phi(y)=y^3/3$, 
as the other case is similar. 
\\
\par
1. Let $\psi$ be a real-valued $C^1$ function on $\R$ such that $\psi$ and 
$\partial_y\psi$ are bounded and consider 
\begin{equation*}
	\e^{-\psi/h}P_\xi\e^{\psi/h} 
	= h\partial_y +\xi +\partial_y\phi +\partial_y\psi.
\end{equation*}
We find by integration by parts that for all $u\in\mathcal{D}(P_\xi)$, 
see \eqref{eq9.1}, 
\begin{equation}\label{eq:REpr0}
	\Rea \left( \e^{-\psi/h}P_\xi\e^{\psi/h} u | u \right)
	= 
	\left( (\xi +\partial_y\phi +\partial_y\psi) u | u \right)
	\geq 
	(\xi + \inf_{y\in\R}(\partial_y\phi +\partial_y\psi) )
	\| u\|^2.
\end{equation}
Assume that 
\begin{equation}\label{eq:REpr1}
	\xi + \inf_{y\in\R}(\partial_y\phi +\partial_y\psi)  >0.
\end{equation}
Then, upon estimating the left hand side of \eqref{eq:REpr0} 
from above by $\|\e^{-\psi/h}P_\xi\e^{\psi/h} u\| \| u \|$, 
we get 
\begin{equation*}
	(\xi + \inf_{y\in\R}(\partial_y\phi +\partial_y\psi) )\| u\|
	\leq 
	\|\e^{-\psi/h}P_\xi\e^{\psi/h} u\|,
\end{equation*}
and therefore 
\begin{equation}\label{eq:REpr2}
	\|\e^{-\psi/h}P_\xi^{-1} \e^{\psi/h}\| 
	\leq 
	(\xi + \inf_{y\in\R}(\partial_y\phi +\partial_y\psi) )^{-1}.
\end{equation}
Writing $P_\xi^{-1} = \e^{\psi/h}(\e^{-\psi/h}P_\xi^{-1} 
\e^{\psi/h})\e^{-\psi/h}$, \eqref{eq:REpr2} implies that 
\begin{equation*}
	\|P_\xi^{-1}\| 	
	\leq 
	\e^{\sup_y \psi/h}
	(\xi + \inf_{y\in\R}(\partial_y\phi +\partial_y\psi) )^{-1}
	\e^{-\inf_y\psi/h}.
\end{equation*}
Thus, 
\begin{equation}\label{eq:REpr3}
	\|P_\xi^{-1} \| 
	\leq 
	\frac{\e^{\frac{1}{h}(\sup_y \psi-\inf_y \psi)}}
	{\xi + \inf_{y\in\R}(\partial_y\phi +\partial_y\psi) }.
\end{equation}
2. Next we choose $\psi$. Let $\alpha>0$ and take
\begin{equation}\label{eq:REpr4}
	\psi(y)
	=
	\begin{cases}
	0, \quad \text{for } y \leq - \sqrt{\alpha}, \\
	\alpha y - \frac{1}{3}y^3 + \frac{2}{3}\alpha^{3/2}, 
		\quad \text{for }- \sqrt{\alpha} \leq y \leq \sqrt{\alpha}, \\ 
	\frac{4}{3}\alpha^{3/2}, \quad\text{for } y \geq \sqrt{\alpha}.
	\end{cases}
\end{equation}
Then $\inf_y(\partial_y\phi + \partial_y\psi)=\alpha$, 
$\sup_y \psi - \inf_y \psi = \frac{4}{3}\alpha^{3/2}$ and we 
get by \eqref{eq:REpr3} 
\begin{equation}\label{eq:REpr5}
	\|P_\xi^{-1} \| 
	\leq 
	\frac{\e^{\frac{4}{3h}\alpha^{3/2}}}
	{\xi + \alpha }, \quad \text{when } \alpha > - \xi. 
\end{equation}
3. Next, we look for a good choice of $\alpha$ when $\xi \geq 0$. 
Differentiating the right hand side of \eqref{eq:REpr5} with respect 
to $\alpha$ we see that it has a critical point when 
$(\alpha +\xi)\alpha^{1/2} = h/2$. This equation has a unique solution 
corresponding to the point of minimum of 
$]0,+\infty[\ni\alpha \mapsto (\xi+\alpha)^{-1} \e^{\frac{4}{3h}\alpha^{3/2}}$. 
Define 
\begin{equation}\label{eq:REpr6}
	f(\alpha,\xi):= \min (2\xi\alpha^{1/2},2\alpha^{3/2}).
\end{equation}
Note that $f(\cdot,\xi)$ is an increasing function such that 
$f(\alpha,\xi)\leq (\alpha+\xi)\alpha^{1/2}$ with equality when 
$\alpha=\xi$. We solve the simplified equation $f(\alpha,\xi) = h/2$ 
and get 
\begin{equation}\label{eq:REpr7}
	\alpha = 
	\begin{cases}
		\left(\frac{h}{4\xi}\right)^2\leq \xi \quad \text{when } \xi 
			\geq \left(\frac{h}{4}\right)^{2/3}\\
		\left(\frac{h}{4}\right)^{2/3}\geq \xi, 
		\quad \text{when }\xi\leq\left(\frac{h}{4}\right)^{2/3}.
	\end{cases}
\end{equation}
Using that $\xi+\alpha\geq \xi$, $\xi+\alpha \geq \alpha$ respectively 
in the two cases, we get 
\begin{equation}\label{eq:REpr8}
	\| P_\xi^{-1}\| \leq 
	\begin{cases}
	\frac{1}{\xi}\e^{\frac{h^2}{3\cdot 4^2 \xi^3}} 
		= \frac{O(1)}{\xi}, \quad 
		\text{when }\xi \geq \left(\frac{h}{4}\right)^{2/3},\\ 
	\frac{1}{(h/4)^{2/3}}\e^{\frac{4}{3h}(h/4)} 
		=
		\frac{O(1)}{\xi+h^{2/3}}, 
		\quad \text{when }\xi\leq \left(\frac{h}{4}\right)^{2/3}.
	\end{cases}
\end{equation}
%
4. Next, we look for an optimal choice of $\alpha$ when 
$-Ch^{2/3}\leq \xi <0$. To simplify the equation 
$(\alpha +\xi)\alpha^{1/2}=h/2$ we approximate it with $g(\alpha,\xi)=h/2$ 
where 
\begin{equation}\label{eq:REpr10}
	g(\alpha,\xi)
	=
	\begin{cases}
		(\alpha+\xi)(-2\xi)^{1/2}, \quad \text{when } -\xi\leq\alpha\leq -2\xi, \\
		\alpha^{3/2}/2,  
		\quad \text{when }-2\xi\leq \alpha.
	\end{cases}
\end{equation}
Note that $(\alpha+\xi)\alpha^{1/2}$ and $g(\alpha,\xi)$ are increasing 
continuous functions of $\alpha\in[-\xi,+\infty[$ which coincide at 
$\alpha=-2\xi$. The solution to the equation $g(\alpha,\xi)=h/2$ is given 
by 
\begin{equation*}
	\alpha
	=
	\begin{cases}
	-\xi +2^{-3/2}h(-\xi)^{-1/2},\text{ when } -\xi\geq h^{2/3}/2, \\
	h^{2/3},\text{ when } -\xi\leq h^{2/3}/2.
	\end{cases}
\end{equation*}
With this choice of $\alpha$ we get by \eqref{eq:REpr5} 
\begin{equation}\label{eq:REpr11}
	\| P_\xi^{-1}\| 
	\leq 
	\frac{2^{3/2}(-\xi)^{1/2}}{h}
	\e^{\frac{4}{3h}(-\xi+2^{-3/2}h(-\xi)^{-1/2})^{3/2}} 
	=O(1)(-\xi)^{-1}, 
\end{equation}
when $-\xi\geq h^{2/3}/2$. Here we used that $h(-\xi)^{-3/2} = O(1)$. 
\par 
When $-\xi \leq h^{2/3}/2$ we have $\alpha=h^{2/3}$ and \eqref{eq:REpr5} 
gives 
\begin{equation}\label{eq:REpr12}
	\| P_\xi^{-1}\| 
	\leq 
	\frac{1}{\xi+h^{2/3}}
	\e^{\frac{4}{3h}h} 
	=2\e^{4/3}h^{-2/3}, 
	\quad \text{when } 0< -\xi\leq \frac{h^{2/3}}{2}.
\end{equation}
This concludes the proof of the first part of Proposition \ref{prop1}. 
\end{proof}
\section{Witten Laplacians}
\label{WL}
We are interested in the small singular values of $P_\xi $ and $P_\xi
^*$ when
\begin{equation}\label{WL.1}
  \begin{cases}\xi \ll -h^{2/3},\\
|\xi |-1\ll -h^{2/3}
  \end{cases}
\hbox{for}
\begin{cases}Y={\R},\\
Y=S^1,
\end{cases}
\hbox{respectively.}
\end{equation}
As already mentioned, this amounts to studying the small eigenvalues
of $P_\xi ^*P_\xi $ and $P_\xi P_\xi^*$ respectively.
\par 
Putting
\begin{equation}\label{WL.2}
	f(y,\xi )=y\xi + \phi (y),
\end{equation}
where the parameter $\xi $ will often be suppressed from the notation,
we can write (\ref{eq9}), (\ref{eq9a}) as
\begin{equation}\label{WL.3}
P_\xi =h\partial _y+\partial _yf,\quad P_\xi^* =-h\partial _y+\partial _yf.
\end{equation}
Here we have to keep in mind that $f(\cdot ,\xi )$ is well defined on
${\R}$ but not in general on $S^1$, since $f(y+2\pi ,\xi
)=f(y)+2\pi \xi $, but $\partial _yf$ remains well-defined on $Y$ in
all cases. 
\par 
From (\ref{WL.3}) we see that $P_\xi ^*P_\xi $ and $P_\xi P_\xi
^* $ are semi-classical Schr\"odinger operators
\begin{equation}\label{WL.4}
\begin{split}
&Q_+:=P_\xi ^*P_\xi =-h^2\partial _y^2+(\partial _yf)^2-h\partial
_y^2f,\\ & Q_-:=P_\xi P_\xi^* =-h^2\partial _y^2+(\partial _yf)^2+h\partial
_y^2f.
\end{split}
\end{equation}
To leading order the potential is given by 
\begin{equation}\label{WL.4.5}
	V_0(y,\xi )=(\partial_yf)^2\geq 0.
\end{equation}
The potential $V_0$ vanishes precisely at the critical points of $f$,
given by $\partial _yf(y,\xi )=0$, i.e.\ $\partial _y\phi (y)=-\xi
$. Under the assumption (\ref{WL.1}) we have precisely two critical
points $y_+, y_-\in Y$. In the case $Y={\R}$, they are given by
\begin{equation}\label{WL.5}
y_{\pm}=\pm \sqrt{-\xi },
\end{equation}
and in the case $Y=S^1$, by 
\begin{equation}\label{WL.6}
	\cos(y_\pm) = -\xi. 
\end{equation}
The $\pm$ notation is fixed so that 
\begin{equation}\label{WL.7}
	\pm \partial _y^2f(y_\pm)(y_\pm)=\mp\sin(y_\pm) >0.
\end{equation}
In both cases, we have $\pm \partial _y^2f(y_\pm )>0$. Hence, $f$ is a 
(multi-valued, when $Y=S^1$) Morse function with critical set $\{y_+, y_- \}$, 
a nondegenerate local minimum at $y_+$ and a nondegenerate local maximum at $y=y_-$. 
We notice that there is a degeneration when $\xi \to 0$ in the case 
$Y={\R}$ and when $\xi\to \pm 1$ in the case $Y=S^1$. Similar degeneracies have 
been analyzed by a scaling argument in \cite{Ha06,BM08,Vo14}, and we will follow 
that approach below.
\\
\par 
Recall the definition of the Witten complex,
\begin{equation}\label{WL.8}
hd_f=e^{-f/h}hd\circ e^{f/h}=hd+df^{\wedge}.
\end{equation}
Here $df=\partial _yf\, dy$ is the differential of $f$, viewed as a
differential $1$-form, $dy\in T_y^*Y\simeq {\R}\subset {\C}$ and $df^\wedge$ is
the operator of pointwise left exterior multiplication with $df$;
$df^\wedge (u)=(df)\wedge u$. For $\ell \in {\N}$, let 
$C^\infty (Y;\wedge^\ell {\C})$ denote the space of smooth $\ell$-forms on $Y$. 
Here we work with the usual convention that 
$C^\infty (Y;\wedge^0 {\C}) \simeq C^\infty (Y)$. Since $Y$ is of dimension 
$1$, this space is trivial except for $\ell=0,1$. Thus 
$hd_f:C^\infty (Y;\wedge^\ell {\C})\to C^\infty (Y;\wedge^{\ell +1}
{\C}) $ and the non-trivial case is the one for $\ell=0$:
$hd_f:C^\infty (Y;\wedge^0{\C})\to C^\infty (Y;\wedge^1{\C}).$
\par 
Writing a general 1-form as $u(y)dy$, we get an identification
$C^\infty (Y;\wedge^1{\C})\simeq C^\infty (Y)$ and correspondingly,
$hd_f$ can be identified with the scalar operator
\begin{equation}\label{WL.9}
h\partial _y+\partial _yf = h\partial _y+\xi +\partial _y\phi =P_\xi .
\end{equation}
Viewing $Y$ as a Riemannian manifold with the natural constant metric,
we have 
\begin{equation}\label{WL.10}
hd_f^*=hd^*+df^{\lrcorner}:\, C^\infty (Y;\wedge^{\ell +1}{\C})\to
C^\infty (Y;\wedge^\ell {\C}).
\end{equation}
Here, $d=\partial _y\otimes dy^\wedge$, $d^*=-\partial _y\otimes
dy^{\lrcorner}$ and $dy^\lrcorner$, $df^\lrcorner$ denote the operators of
contraction with $dy$, $\partial _yfdy$, here identified with the
tangent vectors $\partial _y$, $(\partial _yf)\,\partial _y$.
\par 
Again the only non-trivial case is the one where $\ell =0$ and
$hd_f^*$ can be identified with $-h\partial _y+\partial _yf=P_\xi ^*$.
\par 
In the general case of a Riemannian manifold, the Witten
Laplacian is defined as the Hodge Laplacian of the Witten complex,
$$
\Box_f=hd_f^*hd_f+hd_fhd_f^*.
$$
It maps $\ell$-forms to $\ell$-forms and we denote by
$\Box_f^{(\ell)}$ the restriction to $\ell$-forms.
\par 
In our case it is non-trivial only in degree $\ell=0$ and $\ell=1$ and
we have
$$
\Box_f^{(0)}=hd_f^*hd_f,\ \ \Box_f^{(1)}=hd_fhd_f^*.
$$
Again we can identify these two operators with scalar ones,
\begin{equation}\label{WL.11}
\Box_f^{(0)}\simeq P_\xi ^*P_\xi =:Q_+,\ \ \Box_f^{(1)}\simeq P_\xi
P_\xi ^*=:Q_- .
\end{equation}
Our approach follows the one in \cite{HeSj85} in a very much
simplified case and with \cite{HeSj85b} as the main
technical ingredient.
\subsection{Non-degenerate case}
To start with, we restrict $\xi$ to a compact $h$-independent
subset $\mathfrak{K}$ with 
\begin{equation}\label{WL.11.5}
	\xi \in \mathfrak{K} \Subset 
	\begin{cases}
		]-\infty ,0[, \quad Y=\R, \\
		]-1,1[, \quad Y=S^1. 
	\end{cases}
\end{equation}
Recall from \cite{HeSj84} that in fixed neighborhoods of  $y_+$ and
$y_-$ respectively we can construct quasi-modes of the form
\begin{equation}\label{WL.12}
u_k^{\pm}(y;h)=h^{-1/4}a_k^\pm (y;h)e^{-\frac{1}{h}d(y_\pm ,\cdot )},\ 
\end{equation}
$$
a_k^\pm (y;h)\sim \sum_{j=0}^\infty a_{k,j}^\pm (y)h^j\hbox{ in
}C^\infty (\mathrm{neigh\,}(y_\pm)),\ a_{k,0}\not\equiv 0 .
$$
\begin{equation}\label{WL.13}
(P_\xi ^*P_\xi -\mu _k^\pm )u_k^\pm =\mathcal{ O}(h^\infty
)e^{-\frac{1}{h}d(y_\pm ,\cdot )}.
\end{equation}
Here $d(y_\pm ,y)$ denotes the Lithner-Agmon (LA) distance from
$y_\pm$ to $y$, i.e. the distance with respect to the LA metric
$V_0(y)dy^2$, see \eqref{WL.4.5}. For each $k$ we have 
$\mu _k^\pm (h)\sim h(\mu
_{k,0}^\pm+h\mu _{k,1}^\pm + \dots )$ where $\mu _{k,0}^\pm$ is the
$k$:th eigenvalue (with $k=0,1,\dots$) of the quadratic approximation 
of $P_\xi ^*P_\xi $ at $y_\pm$, given by
\begin{equation}\label{WL.14}
(-\partial _y+\partial _y^2f(y_\pm )(y-y_\pm )) (\partial _y+\partial _y^2f(y_\pm )(y-y_\pm )).
\end{equation}
Now, we check that
\begin{equation}\label{WL.15}
d(y_\pm ,y)=\pm (f(y)-f(y_\pm )),\ \ y\in \mathrm{neigh\,}(y_\pm).
\end{equation}
The operator, $P_\xi ^*P_\xi $ is $\ge 0$ and we know that $P_\xi ^*P_\xi
(e^{-(f-f(y_+))/h})=0$ near $y_+$ so up to a $y$-independent prefactor
$u_k^+(y;h)$ is equal to $e^{-(f-f(y_+))/h}$ and we conclude that $\mu
_0^+(h)=0$ in the sense that the asymptotic expansion vanishes
identically.

\par The same discussion applies to $Q_-=P_\xi P_\xi ^*$:
near $y_+$ and $y_-$ we can
construct quasi-modes of the form
\begin{equation}\label{WL.16}
v_k^{\pm}(y;h)=h^{-1/4}b_k^\pm (y;h)e^{-\frac{1}{h}d(y_\pm ,\cdot )},\ 
\end{equation}
such that in a fixed neighborhood of $y_\pm$
\begin{equation}\label{WL.17}
(P_\xi P_\xi^* -\nu _k^\pm )v_k^\pm =\mathcal{ O}(h^\infty
)e^{-\frac{1}{h}d(y_\pm ,\cdot )}.
\end{equation}
For each $k$ we have $\nu _k^\pm (h)\sim h(\nu
_{k,0}^\pm+h\nu _{k,1}^\pm + ... )$ where $\nu _{k,0}^\pm$ is the
$k$:th eigenvalue of the quadratic approximation of $P_\xi P_\xi^* $
at $y_\pm$, given by
\begin{equation}\label{WL.18}
(\partial _y+\partial _y^2f(y_\pm )(y-y_\pm )) 
(-\partial _y+\partial _y^2f(y_\pm )(y-y_\pm )).
\end{equation}
We know that $P_\xi P_\xi^*
(e^{(f-f(y_+))/h})=0$ near $y_-$, so up to a $y$-independent prefactor
$v_k^-(y;h)$ is equal to $e^{(f-f(y_+))/h}$ and we conclude that $\nu
_0^-(h)=0$ in the sense that the asymptotic expansion vanishes
identically.
\par 
Now recall (\ref{WL.4}) and the fact that $\pm\partial _y^2f(y_\pm
)>0$. Comparing the two expressions and using that $\mu _0^+(h)=0$,
$\nu _0^-(h)=0$, we infer that for some constant $C>0,$
\begin{equation}\label{WL.19}
\nu _0^+(h)\ge h/C,\ \ \mu _-^0(h)\ge h/C
\end{equation}
\par 
By standard arguments (see for instance \cite[Theorem 4.23]{DiSj99}) we 
know that if $C>0$ is large enough, then for $h>0$ small
enough, $Q_+$ has a unique eigenvalue $\lambda _+$ in $[0,h/C[$ which
is simple and satisfies $\lambda _+(h)=\mathcal{ O}(h^\infty )$. Similarly
$Q_-$ has a unique eigenvalue $\lambda _-(h)\in [0,h/C[$ which is
simple and $=\mathcal{ O}(h^\infty )$. 
%
%
Let $e_+$, $e_-$ be the
corresponding normalized eigenvectors: $Q_\pm e_\pm =\lambda _\pm
e_\pm$, and let $F_\pm={\C}e_\pm$ denote the associated
1-dimensional eigenspaces. From the intertwining properties,
\begin{equation*}
	P_\xi Q_+=Q_-P_\xi ,\ \ Q_+P_\xi ^*=P_\xi ^*Q_-,
\end{equation*}
we see that
\begin{equation*}
P_\xi :\, F_+\to F_-,\ \ P_\xi ^*:\, F_-\to F_+,
\end{equation*}
and by duality,
\begin{equation*}
P_\xi :\, F_+^\perp\to F_-^\perp,\ \ P_\xi ^*:\, F_-^\perp\to F_+^\perp,
\end{equation*}
If $\lambda _+\ne 0$, we see that $P_\xi e_+$ is an eigenvector of
$Q_-$ with the eigenvalue $\lambda _+$ and hence $\lambda _+=\lambda
_-$. Similarly $\lambda _+=\lambda _-$ if $\lambda _-\ne 0$ and
trivially also if both $\lambda _+$ and $\lambda _-$ vanish, so in
all cases, $\lambda _+=\lambda _-$.
\\
\par 
Let $\pi _\pm:L^2(Y)\to F_\pm$ be the orthogonal projection. By the
results of \cite[Section 2.2]{HeSj85a} we know that 
$K_{\pi _\pm}=\widetilde{\mathcal{O}}(e^{-d(x,y)/h})$ in the sense explained 
in \cite[Defintion 2.2.1]{HeSj85a}. We recall the meaning here for 
the reader's convenience: saying that the operator kernel 
$K_{\pi _\pm}=\widetilde{\mathcal{O}}(e^{-d(x,y)/h})$ 
means that for any $x_0,y_0\in Y$, $\varepsilon>0$ there exist neighborhoods 
$V\subset Y$ of $x_0$, $U\subset Y$ of $y_0$ and a constant $C>0$ such that 
\begin{equation*}
	\| \pi _\pm u \| _{H^1(V)} 
	\leq 
	C_\varepsilon \e^{-(d(x_0,y_0) - \varepsilon)/h}
	\| u \| _{L^2(U)}, 
\end{equation*}
for all $h\in ]0,1]$ and for all $u\in L^2(Y)$ with $\supp u \subset U$. 
\\
\par 
When $Y=S^1$, let $\theta _\pm\in C_0^\infty (]y_\pm-\eta
,y_\pm+\eta [$ be equal to 1 in $[y_\pm-\eta
/2,y_\pm+\eta /2]$. Here $\eta >0$ is a small parameter. Put $\chi
_\pm =1-\theta _\mp$.
\par 
When $Y={\R}$ (cf (\ref{WL.5})), let $\theta _\pm \in C^\infty
({\R})$
\begin{equation*}
\hbox{with }
\begin{cases}\mathrm{supp\,}\theta _+\subset ]y_+-\eta
	,+\infty [,\\
	\mathrm{supp\,}\theta _-\subset ]-\infty ,y_-+\eta [,
\end{cases}
\hbox{ be equal to }1\hbox{ in }
\begin{cases}[y_+-\eta /2,+\infty [,\\
	]-\infty ,y_-+\eta /2]
\end{cases}.
\end{equation*}
Again, put $\chi _\pm =1-\theta _\mp$.
\par 
Let
\begin{equation}\label{WL.19.5}
u_\pm (y;h)=a_\pm (h)\chi _\pm (y)e^{\mp (f(y)-f(y_\pm ))/h}
\end{equation}
where
\begin{equation}\label{WL.19.7}
a_\pm (h)\sim h^{-1/4}(a_0^\pm+a_-^\pm h+...)\hbox{ with }\ a_0>0
\end{equation}
is a normalization factor such that $\| u_\pm\| =1$. Then
\begin{equation}\label{WL.20}
Q_+u_+=a_+[Q_+,\chi _+]e^{-(f-f(y_+))/h}=:r_+,
\end{equation}
\begin{equation}\label{WL.21}
Q_-u_-=a_+[Q_-,\chi _-]e^{(f-f(y_-))/h}=:r_-.
\end{equation}
In particular,
\begin{equation}\label{WL.22}
r_\pm =\widetilde{\mathcal{ O}}_\eta (e^{-S_0/h}),
\end{equation}
where we put $S_0=d(y_+,y_-)$ and let $\widetilde{\mathcal{ O}}_\eta (A)$
denote a quantity which is $\mathcal{ O}(e^{\epsilon (\eta )/h}A)$ where
$\epsilon (\eta )\to 0$ when $\eta \to 0$.
Evaluating $\|u_\pm\|^2$ by the method of stationary phase -- steepest descent
gives the condition 
$$
\frac{\sqrt{2\pi }}{|\partial _y^2f(y_\pm )|^{1/2}}|a_0^\pm |^2=1,
$$
where $\partial _y^2f=\partial _y^2\phi $, so 
\begin{equation}\label{WL.22.2}
a_0^\pm = \left( \frac{|\partial _y^2\phi (y_\pm)|}{2\pi } \right)^{1/4}.
\end{equation}
Let
$$
p_\xi (y,\eta )=i\eta +\xi +\partial _y\phi 
$$
denote the semi-classical principal symbol of $P_\xi $ in (\ref{WL.3}). Then
$$
\frac{1}{2i}\{ p_\xi ,\overline{p}_\xi  \} = \frac{1}{2i}\{i\eta +\xi
+\partial _y\phi , -i\eta +\xi+\partial _y\phi  \}= \partial _y^2\phi 
$$
and (\ref{WL.22.2}) gives
\begin{equation}\label{WL.22.4}
a_0^\pm = \left( \frac{|\{p_\xi ,\overline{p}_\xi  \}(y_\pm,0)|}{4\pi } \right)^{1/4}.
\end{equation}
\par 
We represent $\pi _\pm$ with the Cauchy formula,
\begin{equation}\label{WL.23}
\pi _\pm =\frac{1}{2\pi i}\int _{\partial D(0,h/(2C))}(z-Q_\pm )^{-1}dz,
\end{equation}
where $C>0$ is as in the paragraph after \eqref{WL.19} and 
we know from \cite[Section 2.2]{HeSj85a} that along the 
integration contour,
\begin{equation}\label{WL.24}
K_{(z-Q_\pm)^{-1}}(x,y)=\widetilde{\mathcal{ O}}(e^{-d(x,y)/h}).
\end{equation}
By (\ref{WL.20}),
$$
(Q_+-z)u_+=-zu_++r_+,\hbox{ i.e.\ }(z-Q_+)^{-1}u_+
=\frac{1}{z}u_++(z-Q_+)^{-1}z^{-1}r_+,
$$
and (\ref{WL.23}) gives
\begin{equation}\label{WL.25}
\pi _+u_+=u_++\frac{1}{2\pi i}\int_{\partial
  D(0,h/(2C))}(z-Q_+)^{-1}z^{-1}r_+ dz.
\end{equation}
Similarly,
\begin{equation}\label{WL.26}
\pi _-u_-=u_-+\frac{1}{2\pi i}\int_{\partial
  D(0,h/(2C))}(z-Q_-)^{-1}z^{-1}r_- dz.
\end{equation}
\par 
Let $K\Subset Y$ be compact. Using (\ref{WL.24}) we get 
on $K$ 
\begin{equation}\label{WL.27}
\pi _+u_+-u_+=\widetilde{\mathcal{ O}}_\eta
(e^{-\frac{1}{h}(S_0+d(y_-,\cdot ))})\in F_+^\perp
\end{equation}
\begin{equation}\label{WL.28}
\pi _-u_--u_-=\widetilde{\mathcal{ O}}_\eta
(e^{-\frac{1}{h}(S_0+d(y_+,\cdot ))})\in F_-^\perp .
\end{equation}
In the case when $Y=\R$ we find by the scaling \eqref{eq:LXI1} below 
that for for any $C_0>0$ there exists a $C>0$ such that for $h>0$ small enough 
\begin{equation}\label{WL.28.5}
	\pi _\pm u_\pm-u_\pm=\mathcal{O}(1) \e^{-C_0/h} \text{ on } \R\backslash [-C,C].
\end{equation}
Noticing that \eqref{WL.22} extends to all derivatives of $r_\pm$, we see that the estimates 
in \eqref{WL.27},\eqref{WL.28} and \eqref{WL.28.5} extend to all derivatives of 
$\pi_\pm u_\pm - u_\pm$. 
\par
Using also that $u_\pm =\widetilde{\mathcal{ O}}(e^{-d(y_\pm ,\cdot )/h
})$, we get
\begin{equation}\label{WL.29}
\|\pi _\pm u_\pm\|=1+\widetilde{\mathcal{ O}}_\eta (e^{-2S_0/h}).
\end{equation}
(This also follows from (\ref{WL.27}), (\ref{WL.28}) and the fact that
$u_\pm=\pi _\pm u_\pm+(1-\pi _\pm )u_\pm$ is an orthogonal
decomposition.)
\par 
Let
\begin{equation}\label{WL.30}
e_\pm=\frac{\pi _\pm u_\pm}{\| \pi _\pm u_\pm\|} =\frac{u_\pm}{\|\pi
  _\pm u_\pm\|}+\widetilde{\mathcal{ O}}_\eta
(e^{-\frac{1}{h}(S_0+d(y_\mp ,\cdot ))}),
\end{equation}
where the last term belongs to $F_\pm^{\perp}$, so (\ref{WL.30}) can
be viewed as an orthogonal decomposition in $F_\pm\oplus F_\pm^\perp$
of $u_\pm/\|\pi _\pm u_\pm \|$.
\par 
By construction, $P_\xi u_+=\widetilde{\mathcal{ O}
}_\eta (e^{-(S_0+d(y_- ,\cdot ))/h})$, hence by (\ref{WL.30}),
\begin{equation}\label{WL.31}
P_\xi e_+=\widetilde{\mathcal{ O}
}_\eta (e^{-\frac{1}{h}(S_0+d(y_- ,\cdot ))}).
\end{equation}
More precisely, since
$$
P_\xi (\hbox{the remainder in (\ref{WL.30})})=\widetilde{\mathcal{
    O}}_\eta (e^{-\frac{1}{h}(S_0+d(y_\mp ,\cdot ))}),
$$
we have
\begin{equation}\label{WL.32}
P_\xi e_+=\frac{P_\xi u_+}{\|\pi _+ u_+\|}+\widetilde{\mathcal{ O}}_\eta
(e^{-\frac{1}{h}(S_0+d(y_-,\cdot ))})
\end{equation}
Similarly, $P_\xi ^*u_-=\widetilde{\mathcal{ O}}_\eta
(e^{-(S_0+d(y_+,\cdot ))/h})$,
\begin{equation}\label{WL.33}
P_\xi^* e_-=\frac{P_\xi^* u_-}{\|\pi _- u_-\|}+\widetilde{\mathcal{ O}}_\eta
(e^{-\frac{1}{h}(S_0+d(y_+,\cdot ))}).
\end{equation}
\par 
Recall that $P_\xi :\, F_+^\perp \to F_-^\perp$, $P_\xi ^*:\,
F_-^\perp\to F_+^\perp$, so that the remainders in (\ref{WL.32}),
(\ref{WL.33}) belong to $F_-^\perp$ and $F_+^\perp$ respectively and
we can view these equations as orthogonal decompositions of $P_\xi
u_+/\|\pi _+ u_+\|$, $P_\xi ^*u_-/\|\pi _-u_-\|$ into $F_-\oplus
F_-^\perp$ and $F_+\oplus F_+^\perp$ respectively.
\par 
We have
$$
P_\xi e_+=m _+e_-,\ \ P_\xi ^*e_-=m _-e_+,
$$
for some $m _+,\, m _-\in {\C}$. Since $e_+$, $e_-$ are
normalized, we have
\begin{equation}\label{WL.34}
m _+=(P_\xi e_+|e_-)=(e_+|P_\xi ^*e_-)=\overline{m }_-.
\end{equation}
It follows that the lowest eigenvalue $\lambda _+$ of $Q_+=P_\xi
^*P_\xi $ is given by
\begin{equation}\label{WL.35}
\lambda _+=(P_\xi ^*P_\xi e_+|e_+)=(P_\xi e_+|P_\xi e_+)=|m _+|^2
\end{equation}
and the lowest eigenvalue $\lambda _-$ of $Q_-=P_\xi P_\xi ^*$ is also
equal to $|m _+|^2$ since $\lambda _+=\lambda _-$.
\par 
We take a closer look at $m _+$ in (\ref{WL.34}). We have by 
(\ref{WL.30})
$$
\| \pi _+u_+\|e_+=u_++\widetilde{\mathcal{ O}}_\eta
(e^{-\frac{1}{h}(S_0+d(y_-,\cdot ))})=a_+(h)\chi
_+(y)e^{-\frac{1}{h}d_+(y_+,\cdot )})+\widetilde{\mathcal{ O}}_\eta
(e^{-\frac{1}{h}(S_0+d(y_-,\cdot ))}),
$$
$$
\| \pi _-u_-\|e_-=u_-+\widetilde{\mathcal{ O}}_\eta
(e^{-\frac{1}{h}(S_0+d(y_+,\cdot ))})=a_-(h)\chi
_-(y)e^{-\frac{1}{h}d_-(y_-,\cdot )})+\widetilde{\mathcal{ O}}_\eta
(e^{-\frac{1}{h}(S_0+d(y_+,\cdot ))}),
$$
where $d_\pm$ denotes the LA distance on $Y\setminus  \{y_\mp \}$ when
$Y=S^1$ and $d_\pm = d$ when $Y={\R}$. Recall that these equations
can be viewed as orthogonal decompositions of $u_+$ and $u_-$ and we
get the same for $P_\xi u_+$ and $P_\xi ^* u_-$ after applying $P_\xi
$, $P^*_\xi $ respectively (cf. (\ref{WL.32}), (\ref{WL.33})). It
follows that
\begin{equation*}
(P_\xi u_+|u_-)
	=\| \pi _+u_+\| \| \pi _-u_-\| \underbrace{(P_\xi
 	 e_+|e_-)}_{m _+}+
	 \widetilde{\mathcal{ O}}_\eta
	(e^{-\frac{3}{h}S_0})
\end{equation*}
and we get
\begin{equation}\label{WL.36}
m _+=(P_\xi u_+|u_-)+\widetilde{\mathcal{ O}}_\eta (e^{-\frac{3}{h}S_0}).
\end{equation}
By (\ref{WL.19.5}), (\ref{WL.9}) and the fact that $\chi _-=1$ on the
support of $\partial _y\chi _+$,
\begin{equation}\label{WL.37}
(P_\xi u_+|u_-)=ha_+(h)\overline{a_-(h)}\int
e^{-\frac{1}{h}(d_+(y_+,y)+d_-(y_-,y))}\partial _y\chi _+(y) dy.
\end{equation}
\par 
In the case $Y={\R}$, $y_+$ is to the right of $y_-$ and we get
$$
(P_\xi u_+|u_-)=ha_+(h)\overline{a_-(h)}e^{-S_0/h},
$$
hence
\begin{equation}\label{WL.38}
  m _+=ha_+(h)\overline{a_-(h)}e^{-S_0/h}+\widetilde{\mathcal{
      O}}_\eta (e^{-3S_0/h}).
\end{equation}
\par 
In the case $Y=S^1$, we recall that $\cos y_\pm=-\xi $ with
 $\pm \partial _y^2\phi (y_\pm )=\mp\sin y_\pm >0 $. When $\xi
=-1+\delta $, $0<\delta \ll 1$, we have $y_\pm \approx \mp
\sqrt{2\delta }\, \mathrm{mod\,}2\pi {\Z} $ and when $\xi =1-\delta
$, $0<\delta \ll 1$, we have $y_\pm \approx\pi \pm \sqrt{2\delta }\,
\mathrm{mod\,}2\pi {\Z} $. To fix the ideas, let us consider that
$y_+\in ]-\pi ,0[$, $y_-\in ]0,\pi [$.
\par 
In (\ref{WL.37}) we identify the integration domain with
$J:=]y_--2\pi ,y_-[$. The support of $\partial _y\chi _+$ is contained
in $]y_--2\pi ,y_--2\pi +\eta [\cup ]y_--\eta ,y_-[$. On $]y_--2\pi
,y_--2\pi +\eta [$, $\partial _y\chi _+\ge 0$ and has integral $1$,
while $d_+(y_+,y)+d_-(y_--2\pi ,y)$ is equal to a constant
$d_J(y_+,y_--2\pi )$, the LA distance from $y_+$ to $y_--2\pi $ in
$J$. On $]y_--\eta ,y_-[$, $\partial _y\chi _+\le 0$ and
has integral $-1$, while $d_+(y_+,y)+d_-(y_-,y)$ is a equal to a
constant $d_J(y_+,y_-)$, the LA distance from $y_+$ to $y_-$ in
$J$.

\par Thus (\ref{WL.37}) gives
$$
(P_\xi u_+|u_-)=ha_+(h)\overline{a_-(h)}\left(
  e^{-\frac{1}{h}d_J(y_+,y_--2\pi )}-e^{-\frac{1}{h}d_J(y_+,y_-)} \right)
$$
and hence when $Y=S^1$,
\begin{equation}\label{WL.38.5}
m _+=ha_+(h)\overline{a_-(h)}\left(
  e^{-\frac{1}{h}d_J(y_+,y_--2\pi )}-e^{-\frac{1}{h}d_J(y_+,y_-)}
\right)+\widetilde{\mathcal{ O}}_\eta (e^{-3S_0/h})
\end{equation}
\begin{prop}\label{WL1}
We define $\phi $ as in (\ref{eq6a}) and $P_\xi =h\partial _y+\xi
+\partial _y\phi $ as in (\ref{WL.3}). The common smallest singular value
of $P_\xi $ is equal to the modulus of $m _+$, introduced prior to
(\ref{WL.34}). Restrict $\xi $ to a compact $h$-independent subset of
$]-\infty ,0[$ when $Y={\R}$ and of $]-1,1[$ when $Y=S^1$. Let
$y_+,\, y_-\in Y$ be the two solutions of the equation $\partial
_y\phi (y)=-\xi $, labelled so that $\pm \partial _y^2\phi (y_\pm
)>0$. Let $d$ denote the LA distance on $Y$ for the metric $(\xi
+\partial _y\phi (y))^2dy^2$ and recall (\ref{WL.15}). Let $S_0=d(y_+,y_-)$.
Then uniformly with respect to $\xi $, we have (\ref{WL.38}), when
$Y={\R}$ and (\ref{WL.38.5}) when $Y=S^1$. In the latter case we
identify $y_+$, $y_-$ with points in $]-\pi ,0[$ and $]0,\pi [$
respectively and let $d_J$ denote the $LA$ distance on $J=]y_--2\pi
,y_-[$. Here
$a_+(h)\overline{a_-(h)}=h^{-1/2}(a_0^+\overline{a_0^-}+\mathcal{ O}(h))$ has  a
full asymptotic expansion in powers of $h$ and $a_0^\pm$ are given in
(\ref{WL.19.7}), (\ref{WL.22.4}). In particular,
\begin{equation}\label{WL.38.7}
\begin{split}
m _+=&h^{\frac{1}{2}}\left( \left( \frac{|\{p_\xi ,\overline{p}_\xi
    \}(y_+,0)|}{4\pi } \right)^{1/4}
\left( \frac{|\{p_\xi ,\overline{p}_\xi  \}(y_-,0)|}{4\pi } \right)^{1/4}
+\mathcal{ O}(h)\right)\\ &
\times  \begin{cases}e^{-S_0/h}+\widetilde{\mathcal{
    O}}_\eta (e^{-3S_0/h}),\hbox{ when }Y={\R}\\
\left(e^{-\frac{1}{h}d_J(y_+,y_--2\pi ) }-e^{-\frac{1}{h}d_J(y_+,y_-)} \right) +\widetilde{\mathcal{
    O}}_\eta (e^{-3S_0/h}),\hbox{ when }Y=S^1
\end{cases}
\end{split}
\end{equation}
\end{prop}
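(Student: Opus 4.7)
The proof essentially requires assembling the constructions already sketched in Section \ref{WL} into a coherent argument, so my plan is to organize the calculation as follows.

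First I would establish that the smallest singular value of $P_\xi$ equals $|m_+|$. This is a consequence of the intertwining relations $P_\xi Q_+ = Q_- P_\xi$ and $Q_+ P_\xi^* = P_\xi^* Q_-$, combined with the fact (invoked via \cite[Theorem 4.23]{DiSj99}) that each of $Q_\pm$ has a unique simple eigenvalue $\lambda_\pm$ in $[0,h/C)$, both being $\mathcal{O}(h^\infty)$. The argument in equations \eqref{WL.34}, \eqref{WL.35} then gives $\lambda_+ = \lambda_- = |m_+|^2$, while a standard spectral gap argument (using the quadratic approximations \eqref{WL.14}, \eqref{WL.18} and \eqref{WL.19}) shows that all other eigenvalues are $\geq h/C$, so that $|m_+|$ is indeed $t_0(P_\xi)$.

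Next I would carry out the quantitative computation of $m_+$. Starting from the normalized approximate ground states $u_\pm$ in \eqref{WL.19.5}, I would use the Cauchy integral representation \eqref{WL.23} of $\pi_\pm$ together with the Agmon-type estimate \eqref{WL.24} on the resolvent kernel along $\partial D(0,h/(2C))$. This gives the orthogonal decompositions \eqref{WL.27}, \eqref{WL.28}, so that the normalized eigenvectors satisfy $e_\pm = u_\pm/\|\pi_\pm u_\pm\| + \widetilde{\mathcal{O}}_\eta(e^{-(S_0 + d(y_\mp,\cdot))/h})$. Applying $P_\xi$ and $P_\xi^*$ respectively and taking inner products yields
\begin{equation*}
    m_+ = (P_\xi e_+ \mid e_-) = \frac{(P_\xi u_+ \mid u_-)}{\|\pi_+ u_+\|\,\|\pi_- u_-\|} + \widetilde{\mathcal{O}}_\eta(e^{-3S_0/h}),
\end{equation*}
and since $\|\pi_\pm u_\pm\| = 1 + \widetilde{\mathcal{O}}_\eta(e^{-2S_0/h})$ by \eqref{WL.29}, the denominator can be absorbed into the error, leaving \eqref{WL.36}.

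Then I would evaluate $(P_\xi u_+ \mid u_-)$. Since $P_\xi u_+ = h a_+(h) (\partial_y \chi_+) e^{-(f - f(y_+))/h}$ and $\chi_- = 1$ on $\supp \partial_y \chi_+$, we obtain \eqref{WL.37}. In the case $Y = \R$, the support of $\partial_y \chi_+$ sits between $y_+$ and $y_-$ and integrates to $1$, while $d(y_+,y) + d(y_-,y) = S_0$ on that support, yielding \eqref{WL.38} directly. In the case $Y = S^1$, I would identify the integration domain with $J = \,]y_--2\pi,\,y_-[$, split the support of $\partial_y\chi_+$ into its two pieces (near $y_--2\pi$ and near $y_-$), and use the fact that $d_J(y_+,\cdot)+d_J(y_\mp,\cdot)$ is locally constant on each piece to get \eqref{WL.38.5}. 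The asymptotic formula \eqref{WL.38.7} is then obtained by combining these with the stationary-phase prefactor $a_0^\pm = (|\{p_\xi,\overline{p}_\xi\}(y_\pm,0)|/(4\pi))^{1/4}$ coming from \eqref{WL.22.4}.

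The main technical obstacle, to my mind, is the careful control of the $\widetilde{\mathcal{O}}_\eta(e^{-3S_0/h})$ remainder: one needs the Agmon estimate \eqref{WL.24} uniformly on the Cauchy contour, together with the localization estimates from \cite[Section 2.2]{HeSj85a}, in order to justify viewing \eqref{WL.32}--\eqref{WL.33} as genuine orthogonal decompositions into $F_\mp \oplus F_\mp^\perp$. Once this is in place, the rest reduces to an explicit Laplace-type evaluation of the cut-off integral in \eqref{WL.37}, and the geometric observation that $d_+(y_+,\cdot) + d_-(y_-,\cdot)$ equals the relevant Agmon distance on the support of $\partial_y\chi_+$.
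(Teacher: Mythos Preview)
Your proposal is correct and follows essentially the same approach as the paper: the proof of Proposition~\ref{WL1} in the paper is precisely the content of the discussion in Section~\ref{WL} leading up to the proposition statement, and your outline recapitulates those steps (intertwining to identify $\lambda_\pm=|m_+|^2$, Cauchy--Agmon control of $\pi_\pm u_\pm$ via \eqref{WL.23}--\eqref{WL.28}, reduction to $(P_\xi u_+\mid u_-)$ as in \eqref{WL.36}, and the explicit evaluation of \eqref{WL.37} in each case) in the same order and with the same error bookkeeping.
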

Before we continue we comment \eqref{WL.38.7} in the 
case when $Y=S^1$. Notice that $P_0 u_0 = 0$ for $u_0(y) =\exp ( 
-\frac{1}{h}\int_0^y\cos t dt)$, $u_0\in C^\infty(S^1)$. Hence, for $\xi=0$ 
the smallest singular value of $P_\xi$ is $t_0(\xi)=0$. 
\par 
Note that $d_J(y_+,y_--2\pi)$ is a strictly increasing and 
$d_J(y_+,y_-)$ is a strictly decreasing smooth function of $\xi\in ]-1,1[$,  
and their $\partial_\xi$ derivatives are $\asymp 1$. Moreover, 
$d_J(y_+,y_--2\pi) \leq d_J(y_+,y_-)$ for $\xi\in [0,1[$ and 
$d_J(y_+,y_--2\pi) \geq d_J(y_+,y_-)$ for $\xi\in ]-1,1[$ with equality 
in both cases precisely when $\xi =0$. Hence, when $Y=S^1$ and 
$\xi$ is in a compact $h$-independent subset of $]-1,1[$, the smallest singular 
value of $P_\xi$ is given by the modulus of $m_+(\xi)$ with 
\begin{equation}\label{WL.38.9}
\begin{split}
	m _+=&h^{\frac{1}{2}}\left( \left( \frac{|\{p_\xi ,\overline{p}_\xi
		\}(y_+,0)|}{4\pi } \right)^{1/4}
	\left( \frac{|\{p_\xi ,\overline{p}_\xi  \}(y_-,0)|}{4\pi } \right)^{1/4}
	+\mathcal{ O}(h)\right)\\ &
	\times 
	e^{-S_0/h}\left(1-e^{-\frac{\asymp |\xi|}{h}} \right).
\end{split}
\end{equation}
\subsection{The degenerate case -- a dilation argument}\label{sec:SmallSGV_DC}
Now consider the case when
\begin{equation}\label{WL38.8}Y={\R}\hbox{ and } h^{2/3}\ll -\xi \ll 1 \end{equation}
or
\begin{equation}\label{WL38.9}
Y=S^1\hbox{ and }
\begin{cases}\hbox{a) } h^{2/3}\ll 1+\xi \ll 1,\\\hbox{or}\\
\hbox{b) }h^{2/3}\ll 1-\xi \ll 1.
\end{cases}
\end{equation}
The cases a) and b) are similar so when $Y=S^1$, we shall limit the discussion to
the case a).
\par 
When $Y={\R}$, we write $\xi =-\delta $, $h^{2/3}\ll \delta
\ll 1$ and when $Y=S^1$ (case a)) we write $\xi =-1+\delta $, $h^{2/3}\ll \delta
\ll 1$.
\par 
Recall that $P_\xi =h\partial _y+\partial _yf(y,\xi )$, $f(y,\xi
)=y\xi +\phi (y)$, cf. \eqref{WL.2}, \eqref{WL.3}. We have
$$
P_\xi =\begin{cases}h\partial _y+y^2-\delta ,\hbox{ when }Y={\R},\\
h\partial _y+(\cos y-1)+\delta , \hbox{ when }Y=S^1,\hbox{ case a)}.
\end{cases}
$$
We make the dilation $y=\alpha \widetilde{y}$ and keep in mind, when
$Y=S^1$, that $2\pi $-periodic functions of $y$ become $2\pi /\alpha
$-periodic in $\widetilde{y}$. Here $\alpha $ is a small parameter
that we choose $\asymp \delta ^{1/2}$. We get in the two cases,
\begin{equation}\label{WL.39}
P_\xi =\alpha ^2\begin{cases}\widetilde{h}\partial
  _{\widetilde{y}}+\widetilde{y}^2-\widetilde{\delta },\\
\widetilde{h}\partial _{\widetilde{y}}+\frac{\cos (\alpha
  \widetilde{y})-1}{\alpha ^2}+\widetilde{\delta },
\end{cases}, \ \widetilde{y}\in \widetilde{Y}, 
\end{equation}
where $\widetilde{h}=h/\alpha ^3$,
$\widetilde{\delta }=\delta /\alpha ^2 \asymp 1$. Here
\begin{equation}\label{WL.40}
  \widetilde{Y}=\begin{cases}
    Y,\hbox{ when }Y={\R},\\
    {\R}/((2\pi /\alpha ){\Z}), \hbox{ when }Y=S^1,
  \end{cases}
\end{equation}
and $\widetilde{h}=h/\alpha ^3\asymp h/\delta ^{3/2}\ll 1$, since
$\delta \gg h^{2/3}$. The function
$$
\partial _{\widetilde{y}}\widetilde{f}(\widetilde{y})=\begin{cases}
  \widetilde{y}^2-\widetilde{\delta },\\
  \frac{\cos (\alpha \widetilde{y})-1}{\alpha ^2}+\widetilde{\delta
  },
\end{cases} \hbox{ vanishes at }\begin {cases}{\widetilde{y}}_\pm=\pm
  \widetilde{\delta }^{1/2},\\
  \widetilde{y}_\pm\approx\mp (2\widetilde{\delta
  })^{1/2}\,\mathrm{mod\,}(2\pi /\alpha ){\Z},\end{cases}, 
$$
when $Y={\R}$ and $Y=S^1$ respectively. Moreover,
$\pm \partial _{\widetilde{y}}^2\widetilde{f}(\widetilde{y}_\pm )\asymp
1$. We notice that $\partial _{\widetilde{y}}\widetilde{f}$ is uniformly bounded
with respect to $\delta $ on every fixed interval containing
$\{\widetilde{y}_+(\alpha ),\, \widetilde{y}_-(\alpha )\}$ but reaches
large values away from these points. Writing
$P_\xi = \alpha ^2\widetilde{P}$ (cf.\ (\ref{WL.39})) with
$\widetilde{P}=\widetilde{h}\partial _{\widetilde{y}}+\partial
_{\widetilde{y}}\widetilde{f}$, we form
$\widetilde{P}^*=-\widetilde{h}\partial _{\widetilde{y}}+\partial
_{\widetilde{y}}\widetilde{f}$ and
$\widetilde{Q}_+=\widetilde{P}^*\widetilde{P}$,
$\widetilde{Q}_-=\widetilde{P}\widetilde{P}^*$ and repeat the
analysis, now with $\widetilde{h}$ as the small semi-classical
parameter. Let $\widetilde{d}$ denote the LA distance associated to
$\widetilde{Q}_\pm$. Then,
\begin{equation}\label{WL.41}
\widetilde{d}(\widetilde{y}_+,\widetilde{y}_-)\asymp 1.
\end{equation}
In the case when $\widetilde{Y}={\R}/((2\pi /\alpha ){\Z})$, this
distance is obtained by following the uniformly bounded segment from
$\widetilde{y}_+$ to $\widetilde{y}_-$, an interval of length $\approx
2(2\widetilde{\delta } )^{1/2}$ Following the complementary segment
from $\widetilde{y}_+$ to $\widetilde{y}_-$ gives an LA distance which
is $\gg 1$.

\par It follows that the conclusion of Proposition \ref{WL1} extends
to the wider ranges (\ref{WL38.8}), (\ref{WL38.9}) after replacing $h$, $d$, ... by $\widetilde{h}$,
$\widetilde{d}$, ... We review this with a few more details. Recall
that $P=P_\xi $ satisfies
\begin{equation}\label{WL.42}
P=h\partial _y+\partial _yf=\alpha ^2(\widetilde{h}\partial
_{\widetilde{y}}+\partial
_{\widetilde{y}}\widetilde{f}(\widetilde{y}),\ \ \partial _yf=\alpha
^2\partial _{\widetilde{y}}\widetilde{f}
\end{equation}
where $y=\alpha \widetilde{y}$, $\alpha \asymp \delta ^{1/2}$. The LA
metrics in $y$, $\widetilde{y}$ are given by $(\partial _yf)^2dy^2$
and $(\partial _{\widetilde{y}}\widetilde{f})^2d\widetilde{y}^2$
respectively. If $y_j=\alpha \widetilde{y}_j$, $j=1,2$, the
corresponding distances are given by
$$
d(y_1,y_2)=\int_I |\partial _yf||dy|,\ \
\widetilde{d}(\widetilde{y}_1,\widetilde{y}_2)=
\int_{\widetilde{I}} |\partial _{\widetilde{y}}\widetilde{f}||d\widetilde{y}|, 
$$
where $Y\subset Y$, $\widetilde{I}\subset \widetilde{Y}$ are
minimizing segments from $y_1$ to $y_2$ and from $\widetilde{y}_1$ to
$\widetilde{y}_2$ respectively. In the natural sense we can take
$I=\{\alpha \widetilde{y};\, \widetilde{y}\in \widetilde{I} \}$. By
direct computation we have
$$
\widetilde{d}(\widetilde{y}_1,\widetilde{y}_2)=\int_{\widetilde{I}}|\partial
_{\widetilde{y}}\widetilde{f}||d\widetilde{y}|=\int_I \frac{1}{\alpha
  ^2}|\partial _yf|\frac{|dy|}{\alpha }=\frac{1}{\alpha ^3}d(y_1,y_2),
$$
and recalling that $h=\alpha ^3\widetilde{h}$,
\begin{equation}\label{WL.43}
\frac{d(y_1,y_2)}{h}=\frac{\widetilde{d}(\widetilde{y}_1,\widetilde{y}_2)}{\widetilde{h}}.
\end{equation}
\par 
From (\ref{WL.42}) we get
\begin{equation}\label{WL.44}
\partial _y^2f
=\partial _y\alpha ^2 \partial_{\widetilde{y}}\widetilde{f}
=\frac{1}{\alpha }\partial _{\widetilde{y}}\alpha ^2
	 \partial_{\widetilde{y}}\widetilde{f}
=\alpha \partial _{\widetilde{y}}^2\widetilde{f}
\end{equation}
and writing $p(y,\eta )=i\eta +\partial _yf$,
$\widetilde{p}(\widetilde{y},\widetilde{\eta )}=i\widetilde{\eta
}+\partial _{\widetilde{y}}\widetilde{f}$, we get
\begin{equation}\label{WL.45}
  \frac{1}{2i}\{p,\overline{p} \}(y,0)
  = \alpha\frac{1}{2i}
  	\{\widetilde{p},\overline{\widetilde{p}} \}(\widetilde{y},0).
\end{equation}
\par 
We can apply Proposition \ref{WL1} in the $\widetilde{h}$,
$\widetilde{y}$ setting. The smallest singular value of
$\widetilde{P}$ is the only one in an interval 
$[0,\widetilde{h}^{1/2}/C[$. It is of the form $|\widetilde{m }_+|$, where
\begin{equation}\label{WL.46}\begin{split}
\widetilde{m} _+=&\widetilde{h}^{\frac{1}{2}}\left( \left( \frac{|\{\widetilde{p} ,\overline{\widetilde{p}}
    \}(\widetilde{y}_+,0)|}{4\pi } \right)^{1/4}
\left( \frac{|\{\widetilde{p} ,\overline{\widetilde{p}}  \}(\widetilde{y}_-,0)|}{4\pi } \right)^{1/4}
+\mathcal{ O}(\widetilde{h})\right)\\ &
\times  \begin{cases}e^{-\widetilde{S}_0/\widetilde{h}},\hbox{ when }Y={\R}\\
\pm e^{-\widetilde{S}_0/\widetilde{h}}, \hbox{ when }Y=S^1.
\end{cases}
\end{split}
\end{equation}
Here $\widetilde{S}_0$ denotes the $\widetilde{d}$ distance in $\widetilde{Y}$ from
$\widetilde{y}_+$ to $\widetilde{y}_-$. In the case $Y=S^1$ we choose
the $+$ sign when the corresponding minimal segment goes from
$\widetilde{y}_+$ to $\widetilde{y}_-$ in the direction of decreasing
$\widetilde{y}$ values (i.e. in the clock-wise direction) and the $-$
sign when the minimal segment goes from $\widetilde{y}_+$ to
$\widetilde{y}_-$ in the increasing $\widetilde{y}$ direction.

This gives the smallest singular value $|m_+|$ of $P_\xi $, where
$m_+=\alpha ^2\widetilde{m}_+$,
\begin{equation*}
\begin{split}
  m_+=&\alpha ^2\left( \frac{h}{\alpha ^3} \right)^{\frac{1}{2}}
  \left( \left( \frac{|\{p_\xi ,\overline{p}_\xi
    \}(y_+,0)|}{4\pi \alpha  } \right)^{1/4}
\left( \frac{|\{p_\xi ,\overline{p}_\xi  \}(y_-,0)|}{4\pi \alpha  } \right)^{1/4}
+\mathcal{ O}\!\left(\frac{h}{\alpha ^3}\right)\right)\\ &
\times  \begin{cases}e^{-S_0/h},\hbox{ when }Y={\R}\\
\pm e^{-S_0/h},\hbox{ when }Y=S^1
\end{cases},
\end{split}
\end{equation*}
which simplifies to
\begin{equation}\label{WL.47}
\begin{split}
m_+=&h^{\frac{1}{2}}\left( \left( \frac{|\{p_\xi ,\overline{p}_\xi
    \}(y_+,0)|}{4\pi } \right)^{1/4}
\left( \frac{|\{p_\xi ,\overline{p}_\xi  \}(y_-,0)|}{4\pi } \right)^{1/4}
+\alpha ^{\frac{1}{2}}\mathcal{ O}\!\left(\frac{h}{\alpha ^3}\right)\right)\\ &
\times  \begin{cases}e^{-S_0/h},\hbox{ when }Y={\R}\\
\pm e^{-S_0/h},\hbox{ when }Y=S^1
\end{cases}.
\end{split}
\end{equation}
Here we keep in mind that $|\{ p_\xi ,\overline{p}_\xi  \} (y_\pm
,0)|\asymp \alpha \asymp \delta ^{1/2}$, and that $S_0\asymp \delta
^{3/2}$ by (\ref{WL.43}) and the fact that $\widetilde{S}_0 \asymp 1$.
\subsection{The case of $Y=\R$ and large negative $\xi$}
In this section with give a short discussion of the case 
when $Y=\R$ and $-\xi \gg 1$. Writing $\alpha=|\xi|^{1/2}$ 
we see that 
\begin{equation}\label{eq:LXI1}
	P_\xi = \alpha^2 \widetilde{P}_1 
	:= \alpha^2( \widetilde{h}\partial_{\widetilde{y}} +
	(\widetilde{y}^2-1)), 
	\quad \widetilde{h}= \frac{h}{\alpha^{3}}, ~
	y=\alpha\widetilde{y},
\end{equation}
and similarly for $P_\xi^*$. 
\par 
Let $t_0(\xi;h),t_1(\xi;h)$ denote the smallest and second smallest 
singular values of $P_\xi$. The scaling \eqref{eq:LXI1} shows that 
$t_j(\xi;h) = |\xi|t_j(-1;h/|\xi|^{3/2})$. In the limit $|\xi|\gg 1$, 
$h\ll1$, we have $h/|\xi|^{3/2}\ll1$, and we see that for some $C_0,C_1>0$, 
\begin{equation}\label{eq:LXI2}
	t_0(\xi;h) = O(1)|\xi| \frac{h^{1/2}}{|\xi|^{3/4}}\e^{-C_0 |\xi|^{3/2}/h } 
			   = \mathcal{O}(1)|\xi|^{1/4}h^{1/2}\e^{-C_0 |\xi|^{3/2}/h } 
\end{equation}
and 
\begin{equation}\label{eq:LXI3}
	t_1(\xi;h)
	\asymp |\xi|\left(\frac{h}{|\xi|^{3/2}}\right)^{1/2} 
	= |\xi|^{1/4}h^{1/2} 
	\geq h^{1/2}/C_1. 
\end{equation}
Combining Proposition \ref{WL1}, \eqref{WL.47}, and \eqref{eq:LXI2}, \eqref{eq:LXI3} 
yields Theorem \ref{thm:main1}.
\\
\par 
We end this section with collecting some of the above results to give the  
\begin{proof}[Proof of Theorem \ref{thm:main0.1}] 
1. The estimate on $t_1(P_\xi)$ for $\xi\in \mathfrak{K}$ as in \eqref{WL.11.5} 
is an immediate consequence of discussion after \eqref{WL.19}. 
\par
2. For $h^{2/3}\ll -\xi \ll 1$ when $Y=\R$, and for 
$h^{2/3}\ll 1 + \xi \ll 1$ or $h^{2/3}\ll 1 - \xi \ll 1$ when 
$Y=S^1$, the estimate on $t_1(P_\xi)$ follows from the discussion 
after \eqref{WL.45}.
\par 
3. For $-\xi \gg 1$ when $Y=\R$, the estimate on $t_1(P_\xi)$ follows 
from \eqref{eq:LXI3}. 
\end{proof}
\section{Counting singular values}\label{sec:CountingSV}
We shall use \eqref{WL.38.7} and its extension \eqref{WL.47} to study 
the number of singular values $t_0(\xi)=|m_+(\xi)|$ for $\xi \in h\Z$ 
in an interval. We begin by reviewing some properties of $S_0=S_0(\xi)$. 
For $Y=\R$ and $Y=S^1$, respectively, we define the action 
\begin{equation}\label{CSV.0.1}
	S_0:\mathcal{D}(S_0):= \begin{cases}
		]-\infty,0[, \quad Y=\R,\\
		]-1,1[,\quad Y=S^1
	\end{cases}
	\longrightarrow ~~
		]0,+\infty[
\end{equation} 
by 
\begin{equation}\label{CSV.0.2}
	S_0(\xi) = d_\xi(y_-(\xi),y_+(\xi)),
\end{equation} 
given by the LA distance on $Y$ for the metric $(\xi+\partial_y\phi(y))^2dy^2$.
\par
When $Y=\R$, $\xi<0$, we have with $f(y,\xi)=y\xi + \phi(y)=y\xi + y^3/3$ 
that 
\begin{equation*}
\begin{split}
	S_0(\xi)
	&= f(y_-(\xi),\xi) - f(y_+(\xi),\xi) \\
	&= \xi(y_-(\xi)-y_+(\xi)) + \frac{1}{3}(y_-(\xi)^3 - y_+(\xi)^3) \\
	&= \xi(-(-\xi)^{1/2}-(-\xi)^{1/2}) + \frac{1}{3}(-(-\xi)^{3/2} -(-\xi)^{3/2})\\
	&=\frac{4}{3}(-\xi)^{4/3}.
\end{split}
\end{equation*}
Since $y_\pm(\xi)$ are critical points of $y\mapsto f(y,\xi)$, we have 
\begin{equation}\label{CSV.1}
\begin{split}
	\partial_\xi S_0(\xi)
	&= (\partial_\xi f)(y_-,\xi)-(\partial_\xi f)(y_+,\xi)\\
	&= y_-(\xi) - y_+(\xi)  \\
	&= -2(-\xi)^{1/2}.
\end{split}
\end{equation}
\par 
When $Y=S^1$, let $\xi \in ]-1,0[$. (The case $\xi \in]0,1[$ can be 
treated similarly and we shall avoid the case $\xi \approx 0$ to 
avoid complications coming from the two competing terms in \eqref{WL.38.7}). 
In this case, corresponding to the case a) in \eqref{WL38.9}, we have with 
$f(y,\xi)=y\xi+\phi(y)=y\xi+\sin y$, that 
$\partial_\xi S_0(\xi)=y_-(\xi)-y_+(\xi)$, where we recall the convention that 
$y_-\in]0,\pi[$, $y_+\in ]-\pi,0[$, so that $y_\pm\approx 0$, when 
$\xi \approx -1$. In this case, $y_\pm \approx \mp \sqrt{2\delta}$ when 
$\xi =-1+\delta$, $0<\delta \ll 1$. Hence $\partial_\xi S_0(\xi)>0$, 
$\partial_\xi S_0 = (1+\mathcal{O}(\delta))2\sqrt{2\delta}$, when $\xi=-1+\delta$, 
$0<\delta\ll 1$. Since $S_0(-1)=0$, we get 
\begin{equation}\label{CSV.2}
	S_0(\xi)=(1+\mathcal{O}(\delta))\frac{2}{3}(2\delta)^{3/2}.
\end{equation}
\subsection{The non-degenerate case}
We first consider the case when $\xi$ varies in a compact set  
$\mathfrak{K}'$ with 
\begin{equation}\label{CSV.1.5}
	\xi \in \mathfrak{K}' \Subset 
	\begin{cases}
		]-\infty ,0[, \quad Y=\R, \\
		]-1,1[, \quad Y=S^1. 
	\end{cases}
\end{equation}
Then, by Proposition \ref{WL1} and \eqref{WL.38.9}, we have 
for $\xi\in \mathfrak{K}'\backslash\{0\}$
\begin{equation*}
	t_0(\xi) = (1+\mathcal{O}(h))h^{\frac{1}{2}}
		\frac{
			|\{p_\xi,\overline{p}_\xi\}(y_+,0)\{p_\xi,\overline{p}_\xi\}(y_-,0)|^{1/4}
		}{(4\pi)^{1/2}}
		\times 
		\begin{cases}
			\e^{-S_0(\xi)/h}, \\
			\left(1-e^{-\frac{\asymp |\xi|}{h}} \right)\e^{-S_0(\xi)/h},
		\end{cases}
\end{equation*}
where $y_\pm=y_\pm(\xi)$, so
\begin{equation}\label{CSV.3}
	\log\!\left(\frac{t_0(\xi)}{\sqrt{h}}\right) 
	= -\frac{1}{h}S(\xi;h) + \mathcal{O}(h),  
\end{equation}
where 
\begin{equation}\label{CSV.4}
\begin{split}
	S(\xi;h) &= S_0(\xi) 
	-\frac{h}{4}\left(
		\log |\{p_\xi,\overline{p}_\xi\}(y_+,0)|
		+\log |\{p_\xi,\overline{p}_\xi\}(y_-,0)|
	\right)\\ 
	&+ \begin{cases}
		\frac{h}{2}\log 4\pi \\
		\frac{h}{2}\log 4\pi - h\log 
		\left(1-e^{-\frac{\asymp |\xi|}{h}} \right)
	\end{cases}
	\\
	&:= S_0(\xi) + hS_1(\xi).
\end{split}
\end{equation}
Let $K\Subset ]0,+\infty[$ be a fixed compact set and let $0<a<b$ 
with $[a,b]\subset K$. We shall study for $Y=\R$ and $Y=S^1$ 
\begin{equation}\label{CSV.5}
N_{\log (t_0/\sqrt{h})}
\left(\left[-\frac{b}{h},-\frac{a}{h}\right]\right) \\
	:= 
	\# \left\{
		\xi \in h\Z\cap \mathcal{D}(S_0);
		\log\! \left(\frac{t_0(\xi)}{\sqrt{h}}\right) 
		\in \left[-\frac{b}{h},-\frac{a}{h}\right]
	\right\}.
\end{equation}
From the orthogonal decomposition \eqref{eq:int1} we see that the singular 
values of $P$ \eqref{eq1} are of the form $t_k(\xi)$, $k\in\N$, $\xi\in h\Z$, 
where $t_k(\xi)$ denote the singular values of $P_\xi$. By Theorem 
\ref{thm:main0.1} we see that the expression \eqref{CSV.5} yields the 
number of singular values of $P$ in the interval 
\begin{equation}\label{eq:Interval1}
	\left[\sqrt{h}\e^{-b/h},\sqrt{h}\e^{-a/h}\right].
\end{equation}
From \eqref{eq:LXI2}, \eqref{WL.47}, \eqref{WL.38.7} and \eqref{WL.38.9}, 
we deduce that we may restrict $\xi\in h\Z\cap \mathcal{D}(S_0)$ in the 
set on the right hand side of \eqref{CSV.5} to 
$\xi\in h\Z\cap (\mathfrak{K}'\backslash\{0\})$. 
\par 
Since $\xi$ varies in the fixed $h$-indepedendent compact set $\mathfrak{K}'$, 
we find by \eqref{CSV.3} and \eqref{CSV.4} that there exist 
constants $C>0$ (which may vary from line to line) such that 
\begin{equation}\label{CSV.6}
\begin{split}
	\big\{	\xi \in h\Z\cap &(\mathfrak{K}'\backslash\{0\}) ;
	S(\xi;h)\in \left[a+Ch^2,b-Ch^2\right]
	\big\}\\
	&\subset 
	\left\{	\xi \in h\Z\cap (\mathfrak{K}'\backslash\{0\}) ;
	S_0(\xi)\in \left[a+Ch,b-Ch\right]
	\right\} \\
	&\subset 
	\left\{	\xi \in h\Z\cap (\mathfrak{K}'\backslash\{0\}) ;
	\log\! \left(\frac{t_0(\xi)}{\sqrt{h}}\right) 
		\in \left[-\frac{b}{h},-\frac{a}{h}\right]
	\right\} \\ 
	&\subset 
	\left\{	\xi \in h\Z\cap (\mathfrak{K}'\backslash\{0\}) ;
	S(\xi;h)\in \left[a-Ch^2,b+Ch^2\right]
	\right\} \\
	&\subset 
	\left\{	\xi \in h\Z\cap (\mathfrak{K}'\backslash\{0\}) ;
	S_0(\xi)\in \left[a-Ch,b+Ch\right]
	\right\}.
\end{split}
\end{equation}
Defining as in \eqref{CSV.5} 
\begin{equation}\label{CSV.7}
	N_{S_0}(J):= \# \{\xi \in h\Z\cap \mathcal{D}(S_0); S_0(\xi)\in J\}
\end{equation}
and similarly for $S$ we get 
\begin{equation}\label{CSV.8}
	N_{S}([a+Ch^2,b-Ch^2])
	\leq 
	N_{\log (t_0/\sqrt{h})}
	\left(\left[-\frac{b}{h},-\frac{a}{h}\right]\right)
	\leq 
	N_{S}([a-Ch^2,b+Ch^2])
\end{equation}
and
\begin{equation}\label{CSV.9}
	N_{S_0}([a+Ch,b-Ch])
	\leq 
	N_{\log (t_0/\sqrt{h})}
	\left(\left[-\frac{b}{h},-\frac{a}{h}\right]\right)
	\leq 
	N_{S_0}([a-Ch,b+Ch]).
\end{equation}
Sacrificing maximal sharpness for simplicity, we continue with 
\eqref{CSV.9} rather than \eqref{CSV.8}. We see that 
\begin{equation*}
	N_{S_0}([a,a+Ch[),
	N_{S_0}([a-Ch,a[) =O(1), \quad C>0,
\end{equation*}
and similarly with $b$ instead of $a$, so from \eqref{CSV.9}, we get 
\begin{equation}\label{CSV.10}
N_{\log (t_0/\sqrt{h})}
	\left(\left[-\frac{b}{h},-\frac{a}{h}\right]\right)
= 
N_{S_0}([a,b]) + \mathcal{O}(1).
\end{equation}
For $c\in K$, we let $\xi_0(c) \in S_0^{-1}(c)$. Then 
\begin{equation*}
	S_0^{-1}([a,b]) = 
	\begin{cases}
		[\xi_0(b),\xi_0(a)]\\
		[-|\xi_0(a)|,-|\xi_0(b)|] \cup [|\xi_0(b)|,|\xi_0(a)|] ,
	\end{cases}
\end{equation*}
depending on whether $S_0$ is decreasing (in the case of $Y=\R$) or increasing 
on $]-1,0]$ and decreasing on $[0,1[$ (in the case of $Y=S^1$). Approximating the counting measure on $h\Z$ by 
$\frac{1}{h}L(d\xi)$, where $L(d\xi)$ denotes the Lebesgue measure on $\R_\xi$, 
we get
\begin{equation}\label{CSV.11}
	\# \left(
		S_0^{-1}([a,b])\cap h\Z
	\right)
	=
	\frac{1}{h} L(S_0^{-1}([a,b])) +\mathcal{O}(1).
\end{equation}
Combining this with \eqref{CSV.10}, we get 
\begin{equation}\label{CSV.12}
	N_{\log (t_0/\sqrt{h})}
		\left(\left[-\frac{b}{h},-\frac{a}{h}\right]\right)
	= 
	\frac{1}{h} L(S_0^{-1}([a,b])) +\mathcal{O}(1).
\end{equation}
Summing up what we have proven so far we get 
\begin{prop}\label{prop:CSV1}
	Let $S_0$ be as in \eqref{CSV.0.1}, \eqref{CSV.0.2} and let 
	$t_0(\xi)=|m_+(\xi)|$ denote the smallest singular value of $P_\xi$. 
	Let $K\Subset ]0,+\infty[$, when $Y=\R$, and $K\Subset]0,S_0(0)[$, 
	when $Y=S^1$, be a fixed compact set and let $[a,b]\subset K$. Define 
	$N_{\log( t_0/\sqrt{h})}$, $N_{S_0}$ as in \eqref{CSV.5}, \eqref{CSV.7}. 
	Then, 
	\begin{equation*}
		N_{\log (t_0/\sqrt{h})}
			\left(\left[-\frac{b}{h},-\frac{a}{h}\right]\right)
		= 
		N_{S_0}([a,b]) + \mathcal{O}(1).
	\end{equation*}
\end{prop}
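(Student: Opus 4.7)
Since the compact interval $[a,b]\subset K$ lies strictly inside the open range of $S_0$, its preimage $S_0^{-1}([a,b])$ is a compact subset of $\mathcal{D}(S_0)\setminus\{0\}$. On such a set, Proposition \ref{WL1} together with \eqref{WL.38.9} provides a clean asymptotic formula for $t_0(\xi)=|m_+(\xi)|$, with the Poisson-bracket prefactors bounded away from $0$ and $\infty$ and a relative error of size $\mathcal{O}(h)$. My plan is to take logarithms, convert the counting condition on $t_0$ into a nearly equivalent condition on $S_0$, and then count lattice points in $h\Z$ inside an interval of $S_0$-values, using that $|\partial_\xi S_0|$ is bounded from below on this compact set.

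\textbf{Logarithmic reduction.} Taking logs in \eqref{WL.38.7}--\eqref{WL.38.9} yields
\begin{equation*}
  \log\!\left(\frac{t_0(\xi)}{\sqrt{h}}\right) = -\frac{S(\xi;h)}{h} + \mathcal{O}(h), \qquad S(\xi;h) = S_0(\xi) + h S_1(\xi),
\end{equation*}
where $S_1$ is uniformly bounded on $S_0^{-1}(K)$. When $Y=S^1$, the extra factor $(1-\e^{-\asymp|\xi|/h})$ contributes only $\log(1-\e^{-\asymp|\xi|/h}) = \mathcal{O}(\e^{-c/h})$ since $|\xi|$ is bounded away from $0$ on $S_0^{-1}(K)$, and is harmlessly absorbed into the $\mathcal{O}(h)$ error. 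Consequently, the condition $\log(t_0(\xi)/\sqrt{h})\in[-b/h,-a/h]$ is equivalent to $S(\xi;h)\in[a+\mathcal{O}(h^2),b+\mathcal{O}(h^2)]$, and after absorbing the $hS_1$ term into the thresholds it is sandwiched between $S_0(\xi)\in[a+Ch,b-Ch]$ and $S_0(\xi)\in[a-Ch,b+Ch]$ for a suitable constant $C>0$, giving the chain of inclusions
\begin{equation*}
  N_{S_0}([a+Ch,b-Ch]) \leq N_{\log(t_0/\sqrt{h})}\!\left(\left[-\tfrac{b}{h},-\tfrac{a}{h}\right]\right) \leq N_{S_0}([a-Ch,b+Ch]).
\end{equation*}

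\textbf{Boundary estimate and conclusion.} It remains to show that the two boundary layers $N_{S_0}([a-Ch,a+Ch])$ and $N_{S_0}([b-Ch,b+Ch])$ are $\mathcal{O}(1)$. This is a direct consequence of the fact that $S_0$ is smooth on $\mathcal{D}(S_0)\setminus\{0\}$ with $\partial_\xi S_0(\xi) = y_-(\xi)-y_+(\xi)$ (cf.\ \eqref{CSV.1}), which is bounded away from $0$ on the compact set $S_0^{-1}(K)$; when $Y=S^1$ one uses that $S_0$ is monotone on each of $]-1,0[$ and $]0,1[$, so $S_0^{-1}([a-Ch,a+Ch])$ is a disjoint union of two intervals of length $\mathcal{O}(h)$, each carrying $\mathcal{O}(1)$ points of $h\Z$. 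Combining the above sandwich with these boundary estimates yields the proposition.

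\textbf{Main obstacle.} The only delicate point is ensuring that the asymptotic formula of Proposition \ref{WL1} is uniform on $S_0^{-1}(K)$ and that, in the $Y=S^1$ case, the interference factor $(1-\e^{-\asymp|\xi|/h})$ contributes only a negligible shift rather than altering the leading order. Both follow from the hypothesis $K\Subset\,]0,S_0(0)[$, which forces $\xi$ to stay a fixed positive distance from the degeneration point $\xi=0$.
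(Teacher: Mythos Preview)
Your proposal is correct and follows essentially the same approach as the paper's proof: the logarithmic reduction to \eqref{CSV.3}--\eqref{CSV.4}, the sandwich inequality \eqref{CSV.9} between $N_{S_0}([a+Ch,b-Ch])$ and $N_{S_0}([a-Ch,b+Ch])$, and the $\mathcal{O}(1)$ boundary-layer estimate via the lower bound on $|\partial_\xi S_0|$ are exactly the steps the paper carries out. Your remark that the hypothesis $K\Subset\,]0,S_0(0)[$ keeps $\xi$ away from $0$ in the $Y=S^1$ case, making the interference factor harmless, is the same observation the paper uses when restricting to $\mathfrak{K}'\setminus\{0\}$.
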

\subsection{The degenerate case}
Next, we consider the case when for some fixed large constant $C>0$
\begin{equation}\label{CSV.13}
	\xi \in \begin{cases}
	]-1/C,0[, \quad Y=\R, \\
	]-1,-1+1/C[\cup ]1-1/C,1[, \quad Y=S^1,
	\end{cases}
\end{equation}
and $\xi$ may approach $0$ and $-1$ respectively up to a distance 
$\delta \gg h^{2/3}$. We have seen in \eqref{WL.47} that if 
\begin{equation}\label{CSV.14}	
	\xi = \begin{cases}
	-\delta, \quad Y=\R, \\
	\pm(-1+\delta), \quad Y=S^1,
	\end{cases}
	h^{2/3}\ll \delta \ll 1, 
\end{equation}
then 
\begin{equation*}
	\frac{t_0(\xi)}{\sqrt{h}} = 
	(1+ \mathcal{O}(\delta^{-3/2}h))
	\frac{
	|\{p_\xi,\overline{p}_\xi\}(y_+,0)\{p_\xi,\overline{p}_\xi\}(y_-,0)|^{1/4}
	}{(4\pi)^{1/2}}
	\e^{-S_0(\xi)/h},
\end{equation*}
where $|\{p_\xi,\overline{p}_\xi\}(y_\pm,0)|\asymp \delta^{1/2}$. Hence 
(cf. \eqref{CSV.3}), 
\begin{equation}\label{CSV.15}
	\log\!\left(\frac{t_0(\xi)}{\sqrt{h}}\right) 
	= -\frac{1}{h}S(\xi;h) + \mathcal{O}(h\delta^{-3/2}).
\end{equation}
Here $S$ and $S_1$ are defined in \eqref{CSV.4} and we know that 
$S_1=\mathcal{O}(1)\log \delta^{-1}$, thus 
\begin{equation}\label{CSV.16}
	S= S_0 + \mathcal{O}(h\log \delta^{-1}),
\end{equation}
and \eqref{CSV.15} gives 
\begin{equation}\label{CSV.17}
	\log\!\left(\frac{t_0(\xi)}{\sqrt{h}}\right) 
	= -\frac{1}{h}S_0(\xi) + \mathcal{O}(\log \delta^{-1}) 
		+ \mathcal{O}(h\delta^{-3/2}). 
\end{equation}
We recall that for $\xi$ as in \eqref{CSV.14}
\begin{equation}\label{CSV.18}
	S_0 \asymp \delta^{3/2}, 
	\quad 
	\begin{cases}
		-\partial_\xi S_0 \asymp \sqrt{\delta}, ~Y=\R,\\
		\pm\partial_\xi S_0 \asymp \sqrt{\delta}, ~\xi=\pm(-1+\delta),~Y=S^1.
	\end{cases}
\end{equation}
Let $K\Subset ]0,+\infty[$ be relatively compact in 
$[0,+\infty[$ 
%
%
and fixed. Let $0<a<b$ with $[a,b]\subset K$ with $b\asymp 1$, 
$a\asymp \delta^{3/2}$. Then, 
\begin{equation}\label{CSV.19}
	S_0^{-1}([a,b]) = 
	\begin{cases}
		[\xi_0(b),\xi_0(a)],~ \xi_0(a)\asymp -\delta,~ |\xi_0(b)|\asymp 1,\\
		[-|\xi_0(a)|,-|\xi_0(b)|] \cup [|\xi_0(b)|,|\xi_0(a)|], 
		~1-|\xi_0(a)|\asymp \delta, 
		~1-|\xi_0(b)|\asymp 1.\partial_\xi S_0 \asymp \sqrt{\delta}, ~pm(-1+\delta),~Y=S^1,\\
	\end{cases}
\end{equation}
\par 
Again we shall study $N_{\log (t_0/\sqrt{h})}([-h^{-1}b,-h^{-1}a])$ 
in \eqref{CSV.5}. We now get from \eqref{CSV.17} that there exist 
constants $C>0$ (which may vary from line to line) such that 
\begin{equation}\label{CSV.20}
\begin{split}
&\{\xi \in h\Z\cap \mathcal{D}(S_0)\backslash\{0\};
	S_0(\xi)\in [a+C(h\log\delta^{-1}+h^2\delta^{-3/2}),
	b-Ch]\}
\\
&\subset 
	\left\{	\xi \in h\Z\cap \mathcal{D}(S_0)\backslash\{0\};
		\log\! \left(\frac{t_0(\xi)}{\sqrt{h}}\right) 
			\in \left[-\frac{b}{h},-\frac{a}{h}\right]
	\right\} \\ 
&\subset 
	\{	\xi \in h\Z\cap \mathcal{D}(S_0)\backslash\{0\};
		S_0(\xi)\in [a-C(h\log\delta^{-1}+h^2\delta^{-3/2}),
		b+Ch]
	\}.
\end{split}
\end{equation}
Provided that if $\xi$ belongs to the last set in \eqref{CSV.20}, we have 
$-\xi \gtrsim \delta$ when $Y=\R$ and $1\pm\xi \gtrsim \delta$ when $Y=S^1$. 
By \eqref{CSV.19}, this is the case when $\xi\in S_0^{-1}([a,b])$. If 
$\xi \in S_0^{-1}([a-\mathcal{O}(1)(h\log\delta^{-1}+h^2\delta^{-3/2}),a[)$, 
then using \eqref{CSV.18}, we have $|\xi|-|\xi_0(a)| = \mathcal{O}(1)\delta^{-1/2}
(h\log \delta^{-1}+ h^2\delta^{-3/2})$ and it then suffices to check that 
\begin{equation*}
	\delta^{-1/2}(h\log \delta^{-1}+ h^2\delta^{-3/2}) \ll \delta,
\end{equation*}
i.e. 
\begin{equation*}
	\log \delta^{-1}+ h\delta^{-3/2} \ll h^{-1}\delta^{3/2}.
\end{equation*}
To have this we strengthen the assumption $\delta \gg h^{2/3}$, i.e. 
$\delta^{3/2} \gg h$, to 
\begin{equation}\label{CSV.21}
	h \ll \frac{\delta^{3/2}}{\log \delta^{-1}}.
\end{equation}
If $\xi \in S_0^{-1}(]b,b+\mathcal{O}(h)])$, then $|\xi|-|\xi_0(b)|=\mathcal{O}(h)$ 
and we have the required control as well. 
\par 
We now adopt the assumption \eqref{CSV.21}. It follows from \eqref{CSV.20} 
and the above estimates that 
\begin{equation}\label{CSV.22}
\begin{split}
	\# \bigg\{
	\xi\in h\Z\cap \mathcal{D}(S_0); &
	\log\! \left(\frac{t_0(\xi)}{\sqrt{h}}\right) 
	\in \left[-\frac{b}{h},-\frac{a}{h}\right]
	\bigg\}\\
	&=
	\# \left(S_0^{-1}([a,b])\cap h\Z\right) 
		+\mathcal{O}(1)\frac{\log \delta^{-1}}{\sqrt{\delta}}\\
	&=
	\frac{1}{h} L(S_0^{-1}([a,b]))
	 +\mathcal{O}(1)\frac{\log \delta^{-1}}{\sqrt{\delta}}.
\end{split} 
\end{equation}
Because of the translation invariance of $S_0$ in $x$, we can 
use the measure $\frac{1}{2\pi h}dx\otimes d\xi$ on $S^1\times \R_\xi$ 
in \eqref{CSV.22} and \eqref{CSV.12} instead of just the measure 
$d\xi$ on $\R_\xi$. Furthermore, $dx\otimes d\xi$ can be identified 
with the symplectic volume element $\sigma|_{\Sigma_+}$ on $\Sigma_+$, 
see \eqref{eq:int1.2}, \eqref{eq:int1.3}. Hence, \eqref{CSV.12}, Proposition 
\ref{prop:CSV1} and \eqref{CSV.22} imply Theorem \ref{thm:main2}.
%
%
%
%
%
%
%
\providecommand{\bysame}{\leavevmode\hbox to3em{\hrulefill}\thinspace}
\providecommand{\MR}{\relax\ifhmode\unskip\space\fi MR }
\providecommand{\MRhref}[2]{%
  \href{http://www.ams.org/mathscinet-getitem?mr=#1}{#2}
}
\providecommand{\href}[2]{#2}

\end{document}